\documentclass[11pt,reqno]{amsart}
\usepackage{amssymb,mathtools,calc,verbatim,enumitem,tikz,url,hyperref,mathrsfs,cite,fullpage}
\usepackage{bbm}
\usepackage{stmaryrd}
\usepackage{textcomp}
\usepackage{setspace}
\usepackage{amssymb}
\usepackage{amsthm}
\usepackage{amsmath}
\usepackage{graphicx}
\usepackage{marvosym}
\usepackage{empheq}
\usepackage{latexsym}
\usepackage{fontenc}
\usepackage{color}
\usepackage{hyperref}
\usepackage{cleveref}
\usepackage{dsfont}

\usepackage{todonotes}

\newenvironment{poc}{\begin{proof}[Proof of claim]}{\end{proof}}

\newtheorem{theorem}{Theorem}[section]
\newtheorem{lemma}[theorem]{Lemma}

\newtheorem{conjecture}[theorem]{Conjecture}
\newtheorem{construction}[theorem]{Construction}

\newtheorem{proposition}[theorem]{Proposition}
\newtheorem{claim}[theorem]{Claim}
\newtheorem*{claim*}{Claim}

\theoremstyle{definition}

\newtheorem*{qu*}{Question}
\theoremstyle{remark}
\newtheorem{remark}[theorem]{Remark}

\newcommand\E{\operatorname{\mathbb{E}}}

\newcommand\dT{\mathcal{T}}

\renewcommand\leq{\leqslant}
\renewcommand\geq{\geqslant}
\renewcommand\le{\leqslant}
\renewcommand\ge{\geqslant}
\renewcommand\to{\rightarrow}

	\def\Prob{\mathbb{P}}

	\def\<{\langle }
	\def\>{\rangle }

\begin{document}

\title{Triangle packings in randomly perturbed graphs}
\author{
Xinbu Cheng 
\and 
Hong Liu 
\and 
Lanchao Wang 
\and 
Zhifei Yan}

\address{IMPA, Estrada Dona Castorina 110, Jardim Bot\^anico,
Rio de Janeiro, 22460-320, Brazil}\email{xinbu.cheng@impa.br}

\address{ECOPRO, Institute for Basic Science, 55 Expo-ro, Yuseong-gu, Daejeon, 34126, Korea}\email{\{hongliu,zhifeiyan\}@ibs.re.kr}

\address{School of Mathematics, Nanjing University, Nanjing, China, and ECOPRO, Institute for Basic Science, 55 Expo-ro, Yuseong-gu, Daejeon, 44126, Korea}\email{lanchaowang@foxmail.com}

\thanks{}

\begin{abstract}
The longstanding Nash-Williams conjecture asserts that every $K_3$-divisible graph $G$ with $\delta(G)\ge 3n/4$ admits a triangle decomposition. In the random setting, Frankl and R\"odl showed that, with high probability, $G(n,p)$ contains a triangle packing covering all but $o(n^2p)$ edges whenever $p\ge n^{-1/2+\varepsilon}$.

In this paper, we study near-perfect triangle packings in randomly perturbed graphs. We prove that for every $d>0$ and every $p>2d/(1+2d)$, if $G_d$ is a $dn$-regular graph on $n$ vertices, then with high probability the union $G_d\cup G(n,p)$ contains a triangle packing covering all but $o(n^2)$ edges. Moreover, this bound on $p$ is best possible for $0<d\le 1/2$, thereby determining the threshold in this range. A key ingredient in the proof is a new triangle-weighting lemma for weighted complete graphs.
\end{abstract}
	
	\maketitle

\section{Introduction}

Graph packings and decompositions are among the central themes of extremal combinatorics. Given a graph $G$, a \emph{triangle packing} in $G$ is a collection of edge-disjoint triangles, and the \emph{triangle packing number}, denoted by $\nu(G)$, is the maximum number of triangles in a triangle packing of $G$. A \emph{perfect triangle packing} (also called a \emph{triangle decomposition}) is a triangle packing that covers all edges of $G$. If $G$ admits a perfect triangle packing, then necessarily $e(G)$ is divisible by $3$ and every vertex degree of $G$ is even; a graph satisfying these necessary conditions is said to be \emph{$K_3$-divisible}. A celebrated theorem of Kirkman~\cite{Kirkman} states that every $K_3$-divisible complete graph $K_n$ has a perfect triangle packing. On the other hand, not every $K_3$-divisible graph admits a perfect triangle packing, as witnessed for instance by $C_6$. A longstanding conjecture of Nash-Williams~\cite{NW} asserts that for sufficiently dense graphs, divisibility is also sufficient.

\begin{conjecture}[Nash-Williams]\label{conj:NW}
Every $K_3$-divisible graph $G$ on $n$ vertices with $\delta(G) \geq 3n/4$ has a perfect triangle packing.
\end{conjecture}

Over the past decades, this conjecture has stimulated a great deal of research. Gustavsson~\cite{Gustavsson} established the first general decomposition results for graphs with very high minimum degree. Haxell and R\"{o}dl~\cite{HR} showed that in dense graphs, a near-perfect fractional triangle packing implies a near-perfect triangle packing, that is, one covering all but $o(n^2)$ edges. Barber, Kühn, Lo, and Osthus~\cite{BKLO} later introduced the iterative absorption method, showing that above the conjectured $3n/4$ threshold, the existence of a near-perfect triangle packing already suffices to obtain a perfect one. Following this framework, Garaschuk~\cite{Garaschuk}, Dross~\cite{Dross}, Dukes and Horsley~\cite{DH}, and Delcourt and Postle~\cite{DelcourtPostle} progressively lowered the asymptotic minimum-degree threshold to the current best bound of $0.82733n$. Note that the threshold $3n/4$ in Conjecture~\ref{conj:NW} would be best possible, even under the additional assumption that $G$ is regular.\footnote{Let $ G $ be the graph obtained by adding an $ (n/4 - 1) $-regular subgraph within each part of $ K_{n/2,n/2} $. Then $ G $ is $(3n/4 - 1)$-regular, and every triangle packing in $ G $ leaves at least $ n/2 $ edges uncovered.}

Triangle packings in random graphs are also well understood. As an early application of R\"odl's nibble method, Frankl and R\"odl~\cite{FR} proved that for every $\varepsilon>0$, if $p\ge n^{-1/2+\varepsilon}$, then with high probability $G(n,p)$ contains a triangle packing covering $(1-o(1))e(G(n,p))$ edges. Yuster~\cite{Yuster} conjectured that, for $p\ge (1+\varepsilon)\sqrt{\log n/n}$, one can leave at most $3n$ edges uncovered. This remains open. More recently, Delcourt, Kelly and Postle~\cite{DKP} introduced refined absorption and proved that if $p\ge n^{-1/3+\varepsilon}$, then with high probability $G(n,p)$ contains a triangle packing leaving at most $n+O(1)$ edges uncovered.

In this paper, we study triangle packings in \emph{randomly perturbed} graphs. The randomly perturbed graph, first introduced by Bohman, Frieze, and Martin \cite{BFM}, is a union of a fixed graph and a random graph on the same vertex set, denoted by $G\cup G(n,p)$, which provides a connection between the extremal and random settings. In recent years, the study of randomly perturbed graphs has led to a number of significant results. These include threshold results for clique factors~\cite{BTW,HMT,BPSS,AKRT}, powers of Hamilton cycles~\cite{BDF}, and various spanning structures such as bounded-degree trees~\cite{JK,KKS,BMPP}. Furthermore, there has been important progress concerning the Ramsey properties of randomly perturbed graphs, particularly for cliques and cycles~\cite{DT}.

From the present perspective, randomly perturbed graphs offer a natural setting interpolating between the deterministic world of Nash-Williams type decomposition problems and the random world of nibble and absorption. This leads to the following basic question: how many random edges must be added to a dense graph in order to force a triangle packing that covers almost all edges?

At first sight this may resemble the now well-studied theory of clique factors in randomly perturbed graphs. However, the problem here is of a rather different nature. Clique-factor problems are governed by \emph{vertex coverage}, whereas triangle packings are governed by \emph{edge coverage} and are closely tied to decomposition phenomena. In particular, divisibility plays an unavoidable role: even in very dense graphs, one cannot in general hope for a perfect decomposition without first correcting a global congruence obstruction. Thus, in the perturbed setting, the right objective is not an exact decomposition but a \emph{near-perfect} packing, namely one covering all but $o(n^2)$ edges.

We focus on the case where the deterministic graph is regular. This assumption is already natural from the viewpoint of Nash-Williams' conjecture, but it also turns out to be structurally important for our argument: regularity provides the uniformity needed to construct a global weighting of triangles on the reduced graph.

To formalize this, for fixed $d\in(0,1)$, let $p_d$ denote the infimum of all $p$ such that the following holds for all sufficiently large admissible $n$: for every $dn$-regular graph $G_d$ on $n$ vertices, the graph $G_d\cup G(n,p)$ contains with high probability a triangle packing covering all but $o(n^2)$ edges.

Our first main result gives a general upper bound on $p_d$.

\begin{theorem}\label{thm:main}
Let $d>0$ and $p> 2d/(1+2d)$, let $n$ be sufficiently large, and let $G_d$ be a $dn$-regular graph on $n$ vertices. Then with high probability, there exists a triangle packing in $G=G_d\cup G(n,p)$ which covers all but $o(n^2)$ edges. In other words, for all $d>0$,
$$p_d\leq \frac{2d}{1+2d}.$$
\end{theorem}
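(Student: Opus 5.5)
We reduce to a fractional problem on a reduced graph and then solve that fractional problem with a triangle-weighting argument. By Haxell--R\"odl~\cite{HR}, it suffices to show that with high probability $G=G_d\cup G(n,p)$ has a fractional triangle packing of total edge-weight $e(G)-o(n^2)$. Equivalently, we want a weighting $w$ on triangles of $G$ such that for every edge $e$ the total weight of triangles containing $e$ is at most $1$, and the sum over all edges of this coverage is $e(G)-o(n^2)$.

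To build such a weighting, apply Szemer\'edi's regularity lemma to $G_d$ with a small parameter $\varepsilon$, giving clusters $V_1,\dots,V_k$. Record the densities $d_{ij}\in[0,1]$ of $G_d$ between clusters. By concentration, $G(n,p)$ restricted to the complement of $G_d$ between $V_i$ and $V_j$ is again $\varepsilon$-regular of density about $(1-d_{ij})p$. Hence $G$ between $V_i,V_j$ has density about
\[
w_{ij}=d_{ij}+(1-d_{ij})p,
\]
and inside clusters the density is about $p$ plus a negligible contribution. This yields a weighted complete graph $K_k$ with weights $w_{ij}$. Because $G_d$ is $dn$-regular, the weighted degrees satisfy $\sum_j d_{ij}\approx dk$ for every $i$; this uniformity is the one place the regularity of $G_d$ is used.

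The core step is a triangle-weighting lemma for this weighted complete graph. We aim to find nonnegative weights $t_{ijl}$ on triples of clusters such that for every pair $ij$ the total weight of triples through $ij$ is at most $w_{ij}$, with equality up to $o(1)$ on average. Each weighted triple $ijl$ is then realised inside the $\varepsilon$-regular triple $(V_i,V_j,V_l)$. The natural way to do this is to split each pair's weight $w_{ij}$ into three shares $t_{ijl}$ indexed by edges of the triple, and to use the fact that in an $\varepsilon$-regular triple with prescribed edge-weight demands proportional to the pair densities, a near-uniform fractional triangle packing exists, since triangle counts there are nearly product-of-densities. Edges inside clusters and irregular pairs contribute $o(n^2)$ and can be discarded.

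The main obstacle is the weighting lemma itself. The difficulty is that a pair with $d_{ij}=1$ has weight $1$, while the triples through it may use pairs of weight only $p$. Heuristically, a pair $ij$ distributes its weight uniformly over the third vertices $l$ in proportion to $w_{il}w_{jl}$. We need, for each $ij$,
\[
\sum_l \frac{w_{il}w_{jl}}{\text{normaliser}}\ \ge\ w_{ij},
\]
and the worst case is a dense pair whose neighbours $l$ are all joined to $i,j$ only by random edges. Using $\sum_l d_{il}=dk$, each heavy edge at $i$ must be absorbed by triangles using two other edges, at least one of which is random. The counting then reduces to requiring that random edges have enough capacity, roughly $p(1-d)\cdot(\text{number of random pairs})\ge$ twice the $G_d$-mass, weighted appropriately. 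This is the inequality $p\ge 2d(1-p)$, i.e.\ $p\ge 2d/(1+2d)$.

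Concretely, we first try the explicit weighting in which each triangle of type (deterministic, random, random) gets weight proportional to $d_{ij}(1-d_{il})p(1-d_{jl})p$, and all-random triangles absorb the leftover random capacity. Regularity of degrees is used to verify the capacity constraints pair by pair, with an LP-duality (Farkas) argument as a fallback if the explicit weights fail on some pairs. Finally, we combine via Haxell--R\"odl and let $\varepsilon\to0$.
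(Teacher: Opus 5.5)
Your outer framework matches the paper's. You reduce to a fractional packing via Haxell--R\"odl, pass to a weighted reduced $K_M$, and realise triple weights inside $\varepsilon$-regular triples by triangle counting with overloaded edges removed. Applying regularity to $G_d$ and adding the random edges by concentration, rather than regularising $G$ as the paper does, is a harmless variant.

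The gap is exactly the step you call the main obstacle. You never produce nonnegative weights on triples of clusters whose sums through each pair are at most $w_{ij}$, with total $\approx\frac13\sum w_{ij}$. The per-pair heuristic (``$ij$ spreads $w_{ij}$ over $l$ in proportion to $w_{il}w_{jl}$'') is not a triangle weighting. A triple must carry a single weight counted by all three of its pairs, and reconciling those three demands is the whole difficulty.

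The concrete scheme you propose, using only (deterministic, random, random) and all-random triples, provably fails. Take $G_d=K_{n/2,n/2}$, so $d=1/2$, and any $p>1/2$.
\begin{itemize}
\item Crossing pairs have $d_{ij}\approx 1$, hence random share $(1-d_{ij})p\approx 0$.
\item Every triple through a crossing pair contains a second crossing pair, so your weight $d_{ij}(1-d_{il})p(1-d_{jl})p$ vanishes on all of them.
\item Hence the $\approx n^2/4$ edges of $G_d$ are never covered. Yet the theorem asserts a near-perfect packing here, which must use triangles with \emph{two} $G_d$-edges.
\end{itemize}
For $d<1/2$ with a random-like bipartite $G_d$, the same count forces $(1-2d)p\ge 2d$, far above $2d/(1+2d)$. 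The Farkas fallback is not a proof either. The dual statement you would need is that $\sum_{i<j}w_{ij}y_{ij}\ge W/3-o(M^2)$ for every fractional triangle cover $y$ of $K_M$, and that is just a restatement of the lemma.

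What closes the gap in the paper is an explicit symmetric formula that never separates deterministic from random edges. With $d_i=\sum_j w_{ij}$ and $W=\sum_{i<j}w_{ij}$, set
\[
x_{ijk}=\frac{w_{ij}+w_{ik}+w_{jk}}{M-4}-\frac{d_i+d_j+d_k}{(M-3)(M-4)}+\frac{2W}{(M-2)(M-3)(M-4)} .
\]
This satisfies $\sum_{k\notin\{i,j\}}x_{ijk}=w_{ij}$ identically, for arbitrary weights. Nonnegativity then needs only three bounds, with $q=d+(1-d)p$:
\begin{itemize}
\item $w_{ij}\ge(1-\varepsilon)p$, from the random edges;
\item $d_i\le(1+\varepsilon)qM$, which is where regularity of $G_d$ is used;
\item $W\ge(1-4\varepsilon)q\binom{M}{2}$.
\end{itemize}
Together these give $x_{ijk}\gtrsim(3p-2q)/M=(p-2d(1-p))/M$, which is positive precisely when $p>2d/(1+2d)$. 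This is the honest source of the threshold; your capacity count, as written ($p(1-d)\gtrsim 2d$), does not actually yield $p\ge 2d(1-p)$.
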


To complement this upper bound, we construct extremal examples yielding lower bounds for $p_d$ in different ranges of $d$. In particular, Theorem~\ref{thm:main} is sharp for $0<d\le 1/2$.

\begin{proposition}\label{prop:lowforall}
There exist constructions such that the lower bounds of $p_d$ hold as follows: 
\begin{equation*}
p_d\geq
\begin{cases}
\frac{2d}{1+2d}, & \text{if } 0< d\leq 1/2, \\[4pt]
\frac{3-4d}{4-4d}, & \text{if } 1/2< d\leq 3/4, \\[4pt]
0, & \text{if } d>3/4.
\end{cases}
\end{equation*}
\end{proposition}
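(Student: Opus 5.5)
We use a single family of constructions. Split the vertex set into two halves $A,B$ of size $n/2$, and place $G_d$ so that the number of available \emph{cross} edges (between $A$ and $B$) is large while the number of available \emph{inside} edges (within $A$ or within $B$) is small. The obstruction is a double-counting argument. Any three vertices meet $A$ and $B$ in a $3+0$ or $2+1$ pattern. So every triangle either consists of three inside edges or of exactly one inside edge and two cross edges. Let $T$ be a triangle packing, let $t_{2,1}$ be the number of its $2+1$ triangles, and let $X$ and $I$ be the numbers of cross and inside edges of $G=G_d\cup G(n,p)$. Then the number of covered cross edges is $2t_{2,1}\le 2I$. Hence at least $X-2I$ cross edges are uncovered, so it suffices to make $X-2I=\Omega(n^2)$ with high probability.

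\emph{Case $0<d\le 1/2$.} Let $G_d$ be any $dn$-regular bipartite graph between $A$ and $B$. This exists since $dn\le n/2$; for instance, take a union of $dn$ perfect matchings of $K_{n/2,n/2}$. Then $G_d$ supplies $dn^2/2$ cross edges and no inside edges. The random graph adds about $p\bigl(n^2/4-dn^2/2\bigr)$ further cross edges and about $2p\binom{n/2}{2}\approx pn^2/4$ inside edges. By Chernoff bounds, with high probability
\[
X-2I=\frac{n^2}{4}\Bigl(2d+p(1-2d)-2p\Bigr)+o(n^2)=\frac{n^2}{4}\bigl(2d-p(1+2d)\bigr)+o(n^2),
\]
which is $\Omega(n^2)$ whenever $p<2d/(1+2d)$ is fixed. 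Hence $p_d\ge 2d/(1+2d)$.

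\emph{Case $1/2<d\le 3/4$.} Let $G_d$ be $K_{A,B}$ together with a $(d-1/2)n$-regular graph inside each of $A$ and $B$; these exist for admissible $n$. Now all $n^2/4$ cross pairs are edges, so $X=n^2/4$. The deterministic inside edges number $(2d-1)n^2/4$. The remaining roughly $n^2/4-(2d-1)n^2/4=(2-2d)n^2/4$ inside pairs are each present with probability $p$. So with high probability
\[
X-2I=\frac{n^2}{4}\Bigl(1-2(2d-1)-2p(2-2d)\Bigr)+o(n^2)=\frac{n^2}{2}\Bigl(\tfrac{3}{2}-2d-p(2-2d)\Bigr)+o(n^2),
\]
which is $\Omega(n^2)$ whenever $p<(3-4d)/(4-4d)$. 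Hence $p_d\ge (3-4d)/(4-4d)$.

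\emph{Case $d>3/4$.} The bound $p_d\ge 0$ is trivial.

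The only points needing care are the following. First, the regular graphs must exist for the relevant admissible $n$; this is standard by taking unions of perfect matchings, possibly adjusting $n$ by a constant. Second, the counts of random edges must concentrate, which follows from Chernoff bounds since each count is a sum of $\Theta(n^2)$ independent indicators. Third, the inequality must be strict by a constant margin for $p$ fixed below the threshold. That margin is what turns the deficit into $\Omega(n^2)$ uncovered edges, which contradicts any packing covering all but $o(n^2)$ edges. The key idea, rather than any of these routine checks, is the observation that every triangle uses at most two cross edges per inside edge.
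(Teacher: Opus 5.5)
Your proposal is correct and is essentially the paper's argument. It uses the same two constructions: a $dn$-regular bipartite graph between the halves for $d\le \tfrac12$, and $K_{n/2,n/2}$ together with a $(d-\tfrac12)n$-regular graph inside each half for $\tfrac12<d\le \tfrac34$. Your double count (at most $2I$ cross edges covered) is equivalent to the paper's observation that every triangle uses an inside edge, which leaves at least $e(G)-3I=X-2I$ edges uncovered, and your explicit computation in the second case fills in what the paper only sketches in Remark~\ref{remark:lowerpd}.
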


While our upper bound is sharp for $0<d\le 1/2$, the exact threshold for $d>1/2$ remains open. Assuming Conjecture~\ref{conj:NW}, however, we obtain a matching conditional upper bound in this regime.

\begin{proposition}\label{prop:fakeupper}
If Conjecture~\ref{conj:NW} is true, then\footnote{More recently, Delcourt and Postle \cite{DP2026} announced a proof of Nash-Williams Conjecture~\ref{conj:NW}. Based on their results, we obtain a complete picture of the sharpness of $p_d$.}
\[
p_d\le
\begin{cases}
\frac{3-4d}{4-4d}, & \text{if } 1/2<d\le 3/4,\\[4pt]
0, & \text{if } d>3/4.
\end{cases}
\]
\end{proposition}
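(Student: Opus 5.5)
The plan is to show that for $p$ above the claimed bound, $G=G_d\cup G(n,p)$ has minimum degree at least $(3/4+\gamma)n$ with high probability. We then delete $O(n)$ edges to make $G$ exactly $K_3$-divisible without losing much minimum degree, and apply Conjecture~\ref{conj:NW} to what remains. The $O(n)=o(n^2)$ deleted edges are the only uncovered ones.

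\emph{Step 1 (degrees).} Fix $p>\frac{3-4d}{4-4d}$ when $1/2<d\le 3/4$, and any $p>0$ (or $p=0$ when $d>3/4$). For each vertex $v$, its degree in $G$ is $dn+X_v$, where $X_v\sim\mathrm{Bin}((1-d)n-1,p)$ counts random edges to non-neighbours of $v$ in $G_d$. The condition on $p$ is equivalent to $d+p(1-d)>3/4$. Choose $\gamma>0$ with $d+p(1-d)\ge 3/4+2\gamma$. Chernoff's bound and a union bound over the $n$ vertices then give $\delta(G)\ge(3/4+\gamma)n$ with high probability. For $d>3/4$ this already holds deterministically with $p=0$, and for $d=3/4$ any $p>0$ works. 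From now on we condition on this event and argue deterministically.

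\emph{Step 2 (parity correction).} Let $T$ be the set of odd-degree vertices of $G$, so $|T|$ is even. Pair up $T$ arbitrarily into pairs $\{u_i,v_i\}$. For each pair we remove a path $u_i w_i v_i$ of length two, which flips the parities of $u_i,v_i$ only. Any two vertices have more than $n/2$ common neighbours since $\delta(G)>3n/4$. Hence we can choose the midpoints $w_i$ greedily, avoiding previously used edges and vertices already chosen as midpoints at least twice. There are at most $n/2$ pairs and each greedy choice excludes only $O(1)$ edges or vertices per step relative to the $>n/2$ candidates, so this succeeds. Each vertex then loses $O(1)$ degree, and the resulting graph $G'$ has all degrees even and $\delta(G')\ge(3/4+\gamma/2)n$.

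\emph{Step 3 (edge count mod 3).} If $e(G')\not\equiv 0\pmod 3$, we remove an edge-disjoint cycle of length $4$ (if $e(G')\equiv 1$) or length $5$ (if $e(G')\equiv 2$). This keeps all degrees even and changes each degree by at most $2$. Such cycles exist trivially given the huge minimum degree, since any two vertices have many common neighbours, so $C_4$ and $C_5$ can be built greedily. The resulting graph $G''$ is $K_3$-divisible with $\delta(G'')\ge 3n/4$ for large $n$.

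\emph{Step 4.} Conjecture~\ref{conj:NW} yields a perfect triangle packing of $G''$, which is a triangle packing of $G$ missing at most $n+5=o(n^2)$ edges. This gives $p_d\le\frac{3-4d}{4-4d}$ for $1/2<d\le3/4$ and $p_d\le 0$ for $d>3/4$.

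The only step needing real care is the greedy parity correction in Step~2: all path edges must be distinct and no vertex may lose more than $O(1)$ degree. The common-neighbourhood bound of size $>n/2$ makes this routine, so I expect no genuine obstacle. The content of the proposition is essentially the degree computation in Step~1.
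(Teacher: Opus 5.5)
Your proposal is correct and follows the paper's argument. Chernoff's bound plus a union bound give $\delta(G)>3n/4$; the paper's computation $dn+(1-d)np>dn+(3/4-d)n$ is exactly your condition $d+p(1-d)>3/4$. Conjecture~\ref{conj:NW} is then applied after discarding $O(n)$ edges to restore $K_3$-divisibility.

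Your Steps~2--3, including the linear slack $\gamma$ that keeps the minimum degree at least $3n/4$ after the deletions, spell out the divisibility correction that the paper only asserts. One small precision: the vertices already used twice as midpoints can number up to $n/4$ in total, not $O(1)$, but that is still fewer than the more than $n/2$ common neighbours, so the greedy choice goes through.
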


Figure~\ref{fig:slope} illustrates the currently known behaviour of $p_d$. Our results determine $p_d$ for $d\le 1/2$, while for $1/2<d\le 0.82733$ the correct value remains unknown. 

\begin{figure}[htbp]
\centering
\begin{tikzpicture}[x=9cm,y=9cm,>=stealth,scale=0.6]

\draw[->, line width=0.9pt] (0,0) -- (1.05,0) node[below right] {$d$};
\draw[->, line width=0.9pt] (0,0) -- (0,0.75) node[above left] {$p$};

\node[below left] at (0,0) {$0$};
\draw (0.5,0.012) -- (0.5,-0.012) node[below] {$\frac12$};
\draw (0.75,0.012) -- (0.75,-0.012) node[below] {$\frac34$};
\draw (0.83,0.012) -- (0.83,-0.02) node[below,scale=0.8] {$0.83$};
\draw (0,0.5) -- (-0.012,0.5) node[left] {$\frac12$};

\draw[dashed] (0,0.5) -- (0.5,0.5);
\draw[dashed] (0.5,0) -- (0.5,0.5);

\draw[blue,line width=1.6pt, samples=100, domain=0:0.5, smooth, variable=\x]
  plot (\x,{\x/(1/2+\x)});

\draw[red, dashed, line width=1.6pt, samples=100, domain=0.5:0.75, smooth, variable=\x]
  plot (\x,{(0.75-\x)/(1-\x)});

\draw[line width=1.6pt] (0.83,0) -- (1.0,0);

\draw[red, dashed, line width=1pt] (0.75,0) -- (0.83,0);

\end{tikzpicture}
\caption{Minimal $p$ for a near-perfect triangle packing in $G_d\cup G(n,p)$.}\label{fig:slope}
\end{figure}
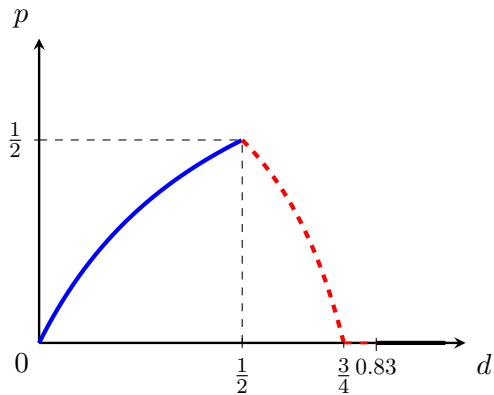

We now briefly comment on the proof of Theorem~\ref{thm:main}. A standard route from dense graph theory would be to pass to a reduced graph and then seek a suitable fractional triangle packing there. The main difficulty is that the reduced graph comes with non-uniform edge densities, and a priori there is no obvious way to distribute triangle weight globally so that every reduced edge receives the required total mass while all triangle weights remain nonnegative. This is precisely where the regularity of $G_d$ enters.

Our main new ingredient is a triangle-weighting lemma for weighted complete graphs (Lemma~\ref{lem:triangleweightkn}). Given edge weights on $K_n$, we write down an explicit symmetric formula that assigns a weight to each triangle so that, for every edge, the total weight of the triangles containing it is exactly the prescribed edge weight. After passing to a reduced graph via the regularity lemma, the densities of regular pairs serve as edge weights, and the regularity of the deterministic part yields the degree control needed to guarantee nonnegativity. We then lift this weighted structure back to the original graph and use the theorem of Haxell and R\"odl~\cite{HR} to convert the resulting fractional packing into an actual triangle packing covering all but $o(n^2)$ edges.

The remainder of the paper is organized as follows. In Section~\ref{sec:toolkit}, we collect the auxiliary lemmas. In Section~\ref{sec:main}, we prove Theorem~\ref{thm:main}. In Section~\ref{sec:low}, we present the extremal constructions for the lower bounds. Finally, in Section~\ref{sec:concluding}, we discuss further directions and open problems.

\section{Toolkit}\label{sec:toolkit}

In this section, we show several technical lemmas, including one on triangle weighting (Lemma~\ref{lem:triangleweightkn}), results concerning regularity and triangle counting (Lemma~\ref{lem:trianglecounting}), and others on fractional triangle packing (Theorem~\ref{thm:HR}).

\subsection{A novel triangle-weighting in complete graphs}
We begin by introducing a triangle weighting for complete graphs, which forms the core of this paper. 

\begin{lemma}\label{lem:triangleweightkn}
Let $K_n$ be a complete graph with $n\geq 5$ and  edge weights $0\leq w_{ij}\leq1$ for each edge $e_{ij}\in E(K_n)$, where $1\leq i<j\leq n$. Let
$d_i:=\sum_{i\neq j}w_{ij}$ and $ W:=\sum_{i<j} w_{ij}$. For each triangle $T=\{e_{ij},e_{jk},e_{ik}\}\subset E(K_n)$, set
\begin{align}\label{eq:xijkl}
x_{ijk}:= \frac{w_{ij} + w_{ik} + w_{jk}}{n-4} - \frac{d_i + d_j + d_k}{(n-3)(n-4)} + \frac{2W}{(n-2)(n-3)(n-4)}.    
\end{align}
Then for each edge $e_{ij}$, the sum of weights of all triangles containing $e_{ij}$ is $w_{ij}$, that is
$$\sum_{k\notin \{i,j\}} x_{ijk}=w_{ij}.$$
\end{lemma}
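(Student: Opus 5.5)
Fix an edge $e_{ij}$ and sum the three terms of \eqref{eq:xijkl} over the $n-2$ vertices $k\notin\{i,j\}$ separately. The proof is then direct bookkeeping: express each sum through $w_{ij}$, $d_i+d_j$ and $W$, and check that the coefficients of $d_i+d_j$ and of $W$ cancel while the coefficient of $w_{ij}$ is $1$.

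\emph{First term.} We have $\sum_{k\ne i,j}(w_{ij}+w_{ik}+w_{jk}) = (n-2)w_{ij} + (d_i - w_{ij}) + (d_j - w_{ij})$, since $\sum_{k\neq i,j} w_{ik} = d_i - w_{ij}$. This equals $(n-4)w_{ij} + d_i + d_j$. Dividing by $n-4$ gives
\[
w_{ij} + \frac{d_i+d_j}{n-4}.
\]

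\emph{Second term.} We have $\sum_{k\ne i,j}(d_i+d_j+d_k) = (n-2)(d_i+d_j) + \bigl(2W - d_i - d_j\bigr)$, using the handshake identity $\sum_k d_k = 2W$. This equals $(n-3)(d_i+d_j) + 2W$. Dividing by $(n-3)(n-4)$ gives
\[
\frac{d_i+d_j}{n-4} + \frac{2W}{(n-3)(n-4)}.
\]

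\emph{Third term.} It does not depend on $k$, so the sum is $(n-2)\cdot \frac{2W}{(n-2)(n-3)(n-4)} = \frac{2W}{(n-3)(n-4)}$.

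\emph{Combining.} The total is
\[
\Bigl(w_{ij} + \tfrac{d_i+d_j}{n-4}\Bigr) - \Bigl(\tfrac{d_i+d_j}{n-4} + \tfrac{2W}{(n-3)(n-4)}\Bigr) + \tfrac{2W}{(n-3)(n-4)} = w_{ij}.
\]
The hypothesis $n\ge5$ only ensures the denominators are nonzero, and the bounds $0\le w_{ij}\le 1$ are not needed for this identity. There is no real obstacle; the one point requiring care is the handshake identity $\sum_k d_k=2W$ in the second term, together with removing the $k=i,j$ contributions correctly. The real difficulty, nonnegativity of the $x_{ijk}$, is not claimed here and will be handled later using the regularity of $G_d$.
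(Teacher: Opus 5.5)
Your proposal is correct and follows essentially the same route as the paper's proof: split the sum into the three terms, use $\sum_{k\notin\{i,j\}} w_{ik}=d_i-w_{ij}$ and the handshake identity $\sum_k d_k=2W$, and observe that the $d_i+d_j$ and $W$ contributions cancel, leaving $w_{ij}$. Your side remarks are also accurate: $n\ge 5$ is needed only to keep the denominators nonzero, and nonnegativity of the $x_{ijk}$ is a separate issue that is handled later.
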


\begin{proof}
Fix an arbitrary edge $e_{ij}$. We want to evaluate the sum $\sum_{k \notin \{i,j\}} x_{ijk}$, where the index $k$ runs over the $n-2$ vertices in $V(K_n) \setminus \{i, j\}$. By the definition of $x_{ijk}$, we can split the summation into three terms:
$$ \sum_{k \notin \{i,j\}} x_{ijk} = S_1 - S_2 + S_3, $$
where
\begin{align*}
S_1 = \sum_{k\notin \{i,j\}} \frac{w_{ij} + w_{ik} + w_{jk}}{n-4},\qquad   S_2 = \sum_{k\notin \{i,j\}} \frac{d_i + d_j + d_k}{(n-3)(n-4)}, \qquad   S_3 = \sum_{k\notin \{i,j\}} \frac{2W}{(n-2)(n-3)(n-4)}.   
\end{align*}

For the first term $S_1$, since $\sum_{k\neq i} w_{ik} = d_i$ and the summation excludes $k=j$, we have $\sum_{k \notin \{i,j\}} w_{ik} = d_i - w_{ij}$. By symmetry, $\sum_{k \notin \{i,j\}} w_{jk} = d_j - w_{ij}$. Since $w_{ij}$ is constant with respect to $k$, it is summed $n-2$ times. Thus,
$$ S_1 = \frac{(n-2)w_{ij} + (d_i - w_{ij}) + (d_j - w_{ij})}{n-4} = \frac{(n-4)w_{ij} + d_i + d_j}{n-4} = w_{ij} + \frac{d_i + d_j}{n-4}. $$

For the second term $S_2$, recall that the sum of degrees of all vertices is $\sum_{k=1}^n d_k = 2W$. Therefore, the sum of degrees excluding vertices $i$ and $j$ is $\sum_{k \notin \{i,j\}} d_k = 2W - d_i - d_j$. The terms $d_i$ and $d_j$ are constant and are summed $n-2$ times. We obtain:
$$ \sum_{k\notin \{i,j\}} (d_i + d_j + d_k) = (n-2)(d_i + d_j) + (2W - d_i - d_j) = (n-3)(d_i + d_j) + 2W. $$
Dividing this by the denominator yields:
$$ S_2 = \frac{(n-3)(d_i + d_j) + 2W}{(n-3)(n-4)} = \frac{d_i + d_j}{n-4} + \frac{2W}{(n-3)(n-4)}. $$

For the third term $S_3$, the sum is independent of $k$, so we simply multiply by the number of terms, which is $n-2$:
$$ S_3 = (n-2) \frac{2W}{(n-2)(n-3)(n-4)} = \frac{2W}{(n-3)(n-4)}. $$

Combining these three parts, we get:
$$ \sum_{k \notin \{i,j\}} x_{ijk} = \left( w_{ij} + \frac{d_i + d_j}{n-4} \right) - \left( \frac{d_i + d_j}{n-4} + \frac{2W}{(n-3)(n-4)} \right) + \frac{2W}{(n-3)(n-4)}. $$
Observe the fractional terms involving the degrees and the total weight perfectly cancel out, leaving
$$ \sum_{k \notin \{i,j\}} x_{ijk} = w_{ij} $$
as required.
\end{proof}

\begin{remark}
Notice that a triangle packing $f$ assigns a \emph{non-negative} weight to each triangle in the graph $G$. The reason we require $G_d$ to be regular in Theorem~\ref{thm:main} is to obtain a uniform upper bound on $d_i$, the edge weight assigned to the edges incident to each vertex $v_i$, thereby ensuring that the triangle weights $x_{ijk} \geq 0$ (see \eqref{eq:xijkl}).
\end{remark}

\medskip

\subsection{Fractional triangle packings vs triangle packings}

We next give a brief introduction on the fractional triangle packings. Given a graph $G$, let $\mathcal{T}(G)$ denote the collection of triangles in $G$. A \emph{fractional triangle packing} is a mapping
$$f : \mathcal{T}(G) \to \mathbb{R}_{\ge 0}$$
such that for every edge $e\in E(G)$, the sum of weight of all triangles containing $e$ is at most $1$, 
$$\sum_{T \in \mathcal{T}(G):\ e\in E(T)} f(T) \le 1
\qquad \text{for every } e \in E(G).$$

The \emph{fractional triangle packing number} of $G$, denoted by $\nu^*(G)$, is defined as
$$\nu^*(G): = \max_f \|f\| = \max_{f} \sum_{T \in \mathcal{T}(G)} f(T),$$
where the maximum is taken over all fractional
triangle packing $f$. Note that $\nu^*(G) \ge \nu(G)$. A well-known
result of Haxell and R\"odl~\cite{HR} states that the triangle packing number
and its fractional analog are close in dense graphs.

\begin{theorem}[Haxell and R\"odl]\label{thm:HR}
Let $\varepsilon > 0$, let $n$ sufficiently large and let $G$ be a graph on $n$ vertices. Then
$$\nu^*(G) - \nu(G) \le \varepsilon n^2.$$
\end{theorem}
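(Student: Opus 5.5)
Since $\nu\le\nu^*$ trivially, the content of Theorem~\ref{thm:HR} is the inequality $\nu(G)\ge \nu^*(G)-\varepsilon n^2$. The plan is to push an optimal fractional packing down to a reduced graph given by Szemerédi's regularity lemma. There we split each regular pair according to the fractional weights, and then realise each piece by an integral packing using the Rödl nibble, in the form of the Pippenger--Spencer (or Frankl--Rödl) near-perfect matching theorem for nearly regular hypergraphs with small codegrees.

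\emph{Step 1 (cleaning).} Apply the regularity lemma with a parameter $\eta\ll\varepsilon$. This gives clusters $V_1,\dots,V_t$ of size $m$ and an exceptional set $V_0$. Delete the following edges: those meeting $V_0$, those inside clusters, those in irregular pairs, and those in pairs of density below $\sqrt\eta$. This yields $G'$ with $e(G)-e(G')\le \varepsilon n^2/4$. Deleting an edge lowers $\nu^*$ by at most $1$, since the triangles through it carry total weight at most $1$. Hence $\nu^*(G')\ge \nu^*(G)-\varepsilon n^2/4$, and it suffices to treat $G'$.

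\emph{Step 2 (reduced fractional packing).} Fix an optimal fractional packing $f$ of $G'$. Every triangle of $G'$ has its three vertices in three distinct clusters $V_a,V_b,V_c$, with all three pairs dense and regular. Set $y_{abc}:=m^{-2}\sum f(T)$, the sum over triangles $T$ with one vertex in each of $V_a,V_b,V_c$. Summing the edge constraints over a pair gives $\sum_c y_{abc}\le d_{ab}$, where $d_{ab}$ is the density of $(V_a,V_b)$. Moreover $\sum y_{abc}\,m^2=\nu^*(G')$. Reduce each $y_{abc}$ slightly, for example by replacing it with $(1-\gamma)y_{abc}$ and discarding triples with $y_{abc}<\gamma$. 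This costs $O(\gamma) n^2$ and leaves slack in every constraint.

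\emph{Step 3 (splitting and nibble).} For each pair $ab$, independently colour each edge of $(V_a,V_b)$ with colour $c$ with probability $y_{abc}/d_{ab}$, and leave it uncoloured otherwise. By the slicing lemma and Chernoff bounds, whp each colour class is $\sqrt\eta$-regular of density about $y_{abc}$. This holds for all pairs $ab$ of the triple simultaneously, so the tripartite graph $H_{abc}$ formed by the three classes labelled by the triple $abc$ is regular with all three densities equal to $\alpha=y_{abc}$. Inside $H_{abc}$ we first delete the few vertices of atypical degree and then the few edges lying in atypically many or few triangles. After this every remaining edge lies in $(1\pm\delta)\alpha^2 m$ triangles, and at most $\delta\alpha m^2$ edges are lost per pair. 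Consider the $3$-uniform hypergraph whose vertices are the edges of $H_{abc}$ and whose hyperedges are its triangles. It is $(1\pm\delta)D$-regular with $D=\alpha^2m\to\infty$, and any two of its vertices lie in at most one common hyperedge. Pippenger--Spencer then provides a matching covering all but $o(\alpha m^2)$ edges, i.e.\ about $\alpha m^2(1-o(1))$ edge-disjoint triangles. The triples use disjoint edge sets, so the union of these packings is a triangle packing of $G'$ of size at least $\sum_{abc} y_{abc}m^2-\varepsilon n^2/2\ge \nu^*(G)-\varepsilon n^2$.

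\emph{Main obstacle.} I expect the hardest part to be Step 3's degree-uniformity requirement. Regularity only controls \emph{most} edges, whereas the nibble needs every edge of the auxiliary hypergraph to have nearly the same triangle-degree. I would handle this by super-regularising each $H_{abc}$: delete atypical vertices, then iteratively delete edges whose triangle count deviates from $\alpha^2 m$ by more than $\delta\alpha^2 m$. Then I would use the counting lemma to check that only $O(\delta)\alpha m^2$ edges are removed and that the deletions do not cascade. A secondary point is to choose $\gamma$ so that every surviving $\alpha\ge\gamma$ is much larger than the regularity errors; this is what makes the splitting in Step 3 preserve regularity at density $\alpha$.
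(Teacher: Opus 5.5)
The paper does not prove this theorem; it imports it from Haxell--R\"odl as a black box, so your sketch has to stand on its own. The architecture you chose is the natural one: regularity, a reduced fractional packing, random splitting of each pair by triangle weights, and a nibble in each equal-density triple. However, the bookkeeping in Step~2 is wrong, and this breaks Step~3.

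Since $\sum_c y_{abc}\le d_{ab}\le 1$, the averaged weights $y_{abc}$ have average at most $1/(t-2)$ over $c$. For example, if $G'$ is the complete $t$-partite graph with parts of size $m$, the uniform optimal packing $f\equiv 1/((t-2)m)$ gives $y_{abc}=1/(t-2)$ for \emph{every} triple. So discarding triples with $y_{abc}<\gamma$ can cost up to $\gamma\binom{t}{3}m^2\approx\gamma t n^2/6$, not $O(\gamma)n^2$, and you would need $\gamma\lesssim\varepsilon/t$.

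Step~3 needs the opposite. A colour class of density $\alpha$ inherits control, from the $\eta$-regularity of $(V_a,V_b)$, only on sets of size at least $\eta m$. The neighbourhoods relevant to counting triangles in $H_{abc}$ have size about $\alpha m$. So the counting lemma in $H_{abc}$ is vacuous unless $\alpha\ge\gamma\gg\eta$, as you note yourself. But $t$ is produced by the regularity lemma with parameter $\eta$ and may be of tower size in $1/\eta$. Hence the hierarchy $\eta\ll\gamma\lesssim\varepsilon/t$ cannot be arranged, and as written the argument may discard essentially all of $\nu^*$.

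The missing idea is to sparsify the reduced packing before thresholding. Replace $y$ by an optimal \emph{vertex} solution $y^*$ of the reduced linear program: maximise $\sum y_{abc}$ subject to $\sum_c y_{abc}\le d_{ab}$ for each pair and $y\ge 0$. Then $\sum y^*\ge\sum y$. A vertex of this bounded polytope in $\mathbb{R}^N$ has $N$ linearly independent tight constraints, and at most $\binom{t}{2}$ of them are pair constraints. So $y^*$ has at most $\binom{t}{2}$ nonzero coordinates. Dropping those below $\gamma$ now costs at most $\gamma\binom{t}{2}m^2\le\gamma n^2/2$, so you can fix $\varepsilon\gg\gamma\gg\eta$ independently of $t$. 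Each pair then carries at most $1/\gamma$ colours, each of density at least $\gamma$, and your Step~3 goes through.

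By contrast, the issue you single out as the main obstacle is routine. Delete, once, the edges of $H_{abc}$ lying in too many triangles, exactly as in the paper's Lemma~\ref{lem:trianglecounting}. Then apply the form of the Frankl--R\"odl/Pippenger--Spencer theorem that only requires all but a $\delta$-fraction of vertices to have degree $(1\pm\delta)D$, all degrees $O(D)$, and codegrees $o(D)$. After that one-shot deletion, the maximum-degree bound together with the total triangle count forces most degrees to be close to $D=\alpha^2m$, so no iterated deletion or cascade analysis is needed.
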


Notice that Theorem~\ref{thm:HR} reduces the problem of finding a maximum triangle packing in $ G $ to that of finding a maximum triangle weighting (i.e., a fractional triangle packing). This approach is particularly effective when $ G $ is very dense (i.e., $ e(G) = \Omega(n^2) $), yielding an error of order $ o(n^2) $.

\medskip

\subsection{Regularity and triangle counting}
We also use regularity in our proof. Let $G$ be a bipartite graph with two parts $V(G)=A\cup B$. The \emph{edge density} of $G$, denoted by $d(A,B)$, is defined as 
$$d(A,B):=\frac{e(A,B)}{|A|\cdot |B|}.$$
And the pair $(A,B)$ is \emph{$\varepsilon$-regular} for $\varepsilon>0$ if
$$|d(A',B')-d(A,B)|\le \varepsilon$$
holds for every $A'\subseteq A$, $B'\subseteq B$ that $|A'|\ge \varepsilon |A|$ and $|B'|\ge \varepsilon |B|$.
We will use the classical regularity lemma of Szemerédi as follows.

\begin{lemma}[Regularity]\label{lem:regularity}
For $0<\varepsilon<1$ and $m\ge 2$, there exists a constant $Z=Z(\varepsilon,m)$ such that the following holds. For every graph $G$, there exists $m\le M\le Z$ and a partition
$$V(G)=V_1\cup \cdots \cup V_{M} \qquad\text{where}\ \big||V_i|-|V_j|\big|\leq 1\ \text{for all}\ 1\leq i,j\leq M,$$
such that $(V_i,V_j)$ is $\varepsilon$-regular for all but at most $\varepsilon M^2$ pairs $(i,j)$ such that $1\le i<j\le M$. Moreover, for each $i\in [M]$, there are at most $\varepsilon M$ values of $j\in [M]$ such that $(V_i,V_j)$ is not $\varepsilon$-regular.
\end{lemma}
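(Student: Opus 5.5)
This is the regularity lemma (Szemerédi), stated in the standard form with an equitable partition and the extra "moreover" clause that each part lies in at most $\varepsilon M$ irregular pairs. I would not reprove it from scratch. I would derive it from the classical version: for every $\varepsilon'>0$ and $m'$ there is $Z'(\varepsilon',m')$ such that every graph has an equitable partition into $m'\le M'\le Z'$ parts with at most $\varepsilon' M'^2$ irregular pairs. The proof of the classical version goes by the usual energy increment argument. Define the index $q(\mathcal{P})=\sum_{i,j}\frac{|V_i||V_j|}{n^2}d(V_i,V_j)^2\le 1$. Whenever more than $\varepsilon' M'^2$ pairs are irregular, refine using the witnessing subsets and re-equalize; this raises $q$ by at least $\varepsilon'^5/2$. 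Hence there are at most $2\varepsilon'^{-5}$ iterations, and the bound $Z'$ is a tower.

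The only extra work is the "moreover" clause, which I would get by a cleaning step.

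1. Apply the classical lemma with $\varepsilon'=\varepsilon^2/8$ and $m'=\max\{2m,\lceil 4/\varepsilon\rceil\}$.
2. Call a part \emph{bad} if it lies in more than $\varepsilon M'/4$ irregular pairs. Since there are at most $\varepsilon' M'^2$ irregular pairs, double counting bounds the number of bad parts by $2\varepsilon' M'^2/(\varepsilon M'/4)=\varepsilon M'$.
3. Break each bad part into pieces and redistribute those vertices among the good parts, so that the partition stays equitable up to $\pm1$. The resulting new parts are the good parts enlarged by at most an $\varepsilon$-fraction. Merging instead of deleting is needed because the statement requires a genuine partition of all of $V(G)$.

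The main obstacle is step 3: adding a small fraction $\eta$ of vertices to both sides of an $\varepsilon'$-regular pair keeps it $(\varepsilon'+O(\eta))$-regular, but only if the parameters are tuned. With $\eta\le \varepsilon M'/(M'-\varepsilon M')\approx\varepsilon$ this would only give $O(\varepsilon)$-regularity, not $\varepsilon$-regularity. So I would run the whole argument with $\varepsilon$ replaced by $c\varepsilon$ for a small absolute constant $c$ (say $c=1/100$) and then check the following for the final partition:
\begin{itemize}
\item[(a)] each enlarged pair between originally good parts is $\varepsilon$-regular;
\item[(b)] each new part lies in at most $c\varepsilon M'+O(c\varepsilon M')\le\varepsilon M$ irregular pairs;
\item[(c)] the total number of irregular pairs is at most $\varepsilon M^2$.
\end{itemize}
The number of parts $M$ stays between $m$ and $Z:=Z'(c^2\varepsilon^2/8,m')$.

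An alternative that avoids the merging analysis is to cite directly the version of the regularity lemma, found in many surveys (e.g.\ Komlós–Simonovits), that already includes the per-vertex condition. Given that this is a toolkit lemma, I would state it as a direct consequence of those standard formulations, with the cleaning argument above as the justification.
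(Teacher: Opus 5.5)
\textbf{Gap in step (a).} The paper does not prove this lemma; it simply invokes it as Szemer\'edi's regularity lemma. Your fallback of citing a standard formulation is therefore exactly the paper's route. The cleaning derivation you offer as justification, however, fails at step (a), because the perturbation estimate it rests on is false.

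Suppose $(A,B)$ is $\varepsilon'$-regular and you form $A^+=A\cup A_0$, $B^+=B\cup B_0$ with $|A_0|\le\eta|A|$ and $|B_0|\le\eta|B|$. A test set $X\subseteq A^+$ with $|X|=\varepsilon|A^+|$ may contain all of $A_0$, which then makes up an $\eta/\varepsilon$ proportion of $X$. The density from $A_0$ to $B$ is unconstrained, since these vertices come from bad parts whose densities to $B$ have nothing to do with $d(A,B)$. For instance, take $d(A,B)=0$ with the bad part complete to $B$. The correct general bound is only
\[
|d(X,Y)-d(A^+,B^+)|\le \varepsilon'+2\eta+\frac{2\eta}{\varepsilon},
\]
and the $\eta/\varepsilon$ term is sharp up to constants. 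Equivalently, the enlarged pair is $(\varepsilon'+O(\sqrt{\eta}))$-regular, not $(\varepsilon'+O(\eta))$-regular.

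To keep the partition equitable, every good part must absorb roughly the same share of vertices. So $\eta$ is essentially the proportion of bad parts, which in your scheme is about $c\varepsilon$. The density deviation is then of order $c$, which exceeds $\varepsilon$ once $\varepsilon$ is below a constant multiple of $c$. Replacing $\varepsilon$ by $c\varepsilon$ with an absolute constant $c$ therefore cannot rescue (a).

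\textbf{Repair.} Decouple the two roles of $\varepsilon$: keep the badness threshold of order $\varepsilon M'$, but make the proportion of bad parts of order $\varepsilon^2$.
\begin{enumerate}
\item Apply the classical lemma with $\varepsilon'=\varepsilon^3/64$ and $m'=2m$. Call a part bad if it lies in more than $\varepsilon M'/2$ irregular pairs.
\item Double counting gives fewer than $4\varepsilon'M'/\varepsilon=\varepsilon^2M'/16$ bad parts. Hence, for $n$ large compared with $M'^2$ (which also handles the rounding), each good part absorbs at most an $\eta\le\varepsilon^2/14$ fraction of new vertices.
\item Every test set then satisfies $|X\cap A|\ge(\varepsilon-\eta)|A|\ge\varepsilon'|A|$. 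Also $\varepsilon'+2\eta+2\eta/\varepsilon<\varepsilon$, so every originally regular pair of good parts stays $\varepsilon$-regular.
\item Each new part lies in at most $\varepsilon M'/2\le\varepsilon M$ irregular pairs. There are at most $\varepsilon'M'^2\le\varepsilon M^2$ irregular pairs in total, and $m\le M\le Z'(\varepsilon^3/64,2m)$.
\end{enumerate}
With this change your cleaning argument becomes a valid derivation of the stated version from the textbook one.
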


We next present a standard triangle counting lemma for regular triples, which will be used to transfer triangle weights from the reduced graph to a final triangle packing in the original graph.

\begin{lemma}[Triangle counting]\label{lem:trianglecounting}
For $\alpha,\beta,\gamma>p>0$, there exists $C=C(p)>0$ such that the following holds. For any $0<\varepsilon<p/2$, let $G[X,Y,Z]$ be a $3$-partite graph with $d(X,Y)=\alpha$, $d(Y,Z)=\beta$ and $d(Z,X)=\gamma$, and all the three pairs are $\varepsilon$-regular. Then there exists a collection of triangles $\mathcal T$ in $G$ such that
\[
|\mathcal T|
\ge
(1-C\varepsilon)\alpha\beta\gamma |X||Y||Z|.
\]
Moreover, let $\dT(e)\subset\dT$ be the set of triangles containing $e$ in $\dT$ for each edge $e\in E(G)$. Then
\begin{equation}\label{eq:Taue}
|\dT(e)| \le
\begin{cases}
(1+C\varepsilon)\beta\gamma |Z| & \text{if } e \in E(X,Y), \\[4pt]
(1+C\varepsilon)\alpha\gamma |X| & \text{if } e \in E(Y,Z), \\[4pt]
(1+C\varepsilon)\alpha\beta |Y| & \text{if } e \in E(X,Z).
\end{cases}
\end{equation}
\end{lemma}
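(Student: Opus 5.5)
The natural approach is to take $\dT$ to be a carefully pruned subcollection of the triangles in $G[X,Y,Z]$. We discard edges and vertices of atypical degree, so that every remaining edge lies in about the expected number of triangles. The standard counting lemma already gives that the total number of triangles is $(1\pm C'\varepsilon)\alpha\beta\gamma|X||Y||Z|$, so the work is in the upper bound \eqref{eq:Taue} on $|\dT(e)|$.

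The first step is vertex cleaning. By $\varepsilon$-regularity, all but at most $2\varepsilon|X|$ vertices $x\in X$ have $|N_Y(x)|=(\alpha\pm\varepsilon)|Y|$ and $|N_Z(x)|=(\gamma\pm\varepsilon)|Z|$. Similar statements hold in $Y$ and $Z$. We delete the atypical vertices. The surviving parts $X',Y',Z'$ have size at least $(1-2\varepsilon)$ times the originals, and the pairs remain $2\varepsilon$-regular with densities changed by at most $\varepsilon$.

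The second step is the codegree control. For a typical $x$, the sets $N_Y(x)$ and $N_Z(x)$ have sizes at least $(\alpha-\varepsilon)|Y|\ge\varepsilon|Y|$ and $(\gamma-\varepsilon)|Z|\ge\varepsilon|Z|$, using $\varepsilon<p/2<\alpha/2$. Regularity of $(Y,Z)$ then gives
\[
e(N_Y(x),N_Z(x))=(\beta\pm\varepsilon)|N_Y(x)||N_Z(x)|.
\]
Summing over $x$ yields the lower bound on the total triangle count with error $O(\varepsilon/p^3)\cdot\alpha\beta\gamma|X||Y||Z|$. This is why $C$ depends on $p$: relative errors are $\varepsilon/\min(\alpha,\beta,\gamma)\le \varepsilon/p$.

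The third step deals with edge codegrees. For an edge $xy$, the number of triangles through it is $|N_Z(x)\cap N_Z(y)|$. The key estimate is that for each typical $x$, all but at most $\varepsilon|Y|$ vertices $y\in N_Y(x)$ satisfy
\[
|N_Z(y)\cap N_Z(x)|\le(\beta+\varepsilon)|N_Z(x)|\le(\beta+\varepsilon)(\gamma+\varepsilon)|Z|.
\]
This holds because otherwise the set of bad $y$, of size at least $\varepsilon|Y|$, together with $N_Z(x)$, would violate regularity of $(Y,Z)$. We call an edge $xy$ \emph{bad} if it fails this bound, and define bad edges in $E(Y,Z)$ and $E(X,Z)$ analogously. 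Then $\dT$ is the set of triangles with no bad edge and all vertices typical. With this definition \eqref{eq:Taue} holds immediately with $(1+C\varepsilon)$, since $(\beta+\varepsilon)(\gamma+\varepsilon)\le(1+C\varepsilon)\beta\gamma$ when $\beta,\gamma>p$.

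The main obstacle is verifying that removing triangles through bad edges costs only $O(\varepsilon)\alpha\beta\gamma|X||Y||Z|$. Each typical $x$ has at most $\varepsilon|Y|$ bad partners $y$. Each such bad edge lies in at most $|N_Z(x)|\le(\gamma+\varepsilon)|Z|$ triangles. So bad $XY$-edges destroy at most $\varepsilon|X||Y|(\gamma+\varepsilon)|Z|$ triangles. This is $O(\varepsilon/(\alpha\beta))$ relative to the main term, which is $O_p(\varepsilon)$. The same bound holds for the other two edge classes, and also for the triangles lost by deleting atypical vertices, which is at most $6\varepsilon|X||Y||Z|$. Subtracting these losses from the lower bound of the second step gives
\[
|\dT|\ge(1-C\varepsilon)\alpha\beta\gamma|X||Y||Z|
\]
for a suitable $C=C(p)$, completing the proof.
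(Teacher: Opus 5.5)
Your proposal is correct and follows essentially the same route as the paper. Both arguments start from the standard counting-lemma lower bound, use $\varepsilon$-regularity to show that for each typical $x$ all but $O(\varepsilon)|Y|$ vertices $y$ satisfy $|N_Z(x)\cap N_Z(y)|\le(1+O_p(\varepsilon))\beta\gamma|Z|$, and then discard every triangle through a bad edge or an atypical vertex, losing only $O_p(\varepsilon)\alpha\beta\gamma|X||Y||Z|$ triangles. The differences are cosmetic: you apply regularity of $(Y,Z)$ directly to the bad set and $N_Z(x)$ rather than invoking the slicing lemma, and requiring two degree conditions gives up to $4\varepsilon|X|$ (not $2\varepsilon|X|$) atypical vertices, which only affects constants.
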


\begin{proof}
First, let $\dT_0$ be the collection of triangles in $G$. By a standard counting argument (see e.g. Lemma 2 in Lecture 5, \cite{Conlon}), we have
\begin{align}\label{eq:Tau0}
|\mathcal T_0|\ge(1-2\varepsilon)(\alpha-\varepsilon)(\beta-\varepsilon)(\gamma-\varepsilon)\,|X|\,|Y|\,|Z|\geq (1-C_0\varepsilon)\alpha\beta\gamma\,|X|\,|Y|\,|Z|    
\end{align}
holds for some $C_0>0$ depending only on $p$. 

To control the number of triangles containing each edge, we next remove the edges that are contained in “too many” triangles, along with those triangles. We claim that this process removes only an $O(\varepsilon)$ proportion of the triangles in $\dT$, as below.

\begin{claim}\label{claim:XY}
There exists a constant $C_1=C_1(p)>0$ such that at most $C_1\varepsilon|X||Y|$ edges $xy\in E(X,Y)$ lie in more than $(1+C_1\varepsilon)\beta\gamma\,|Z|$ triangles of $\mathcal T_0$. Moreover, the number of triangles containing these edges is at most $C_1\varepsilon|X||Y||Z|$.
\end{claim}

\begin{poc}
Since $(X,Z)$ is $\varepsilon$-regular with density $\gamma$, all but at most $2\varepsilon |X|$ vertices $x\in X$ satisfy 
\begin{align}\label{eq:|Nz(x)|}
(\gamma-\varepsilon)|Z|\leq|N_Z(x)|\leq (\gamma+\varepsilon)|Z|,    
\end{align}
where $N_Z(x)=N(x)\cap Z$. For each such $x$, observe that (by a standard slicing lemma) $(Y,N_Z(x))$ is $\varepsilon'$-regular with density
\begin{align}\label{eq:d(Y,N_Z(x))}
\beta- \varepsilon'\leq d(Y,N_Z(x))\leq\beta+ \varepsilon',    
\end{align}
where $\varepsilon'=\max\{\varepsilon/(\gamma-\varepsilon), 2\varepsilon\}$. Hence all but at most $2\varepsilon' |Y|$ vertices $y\in Y$ satisfy
\begin{align*}
|N_Z(y)\cap N_Z(x)|\leq (\beta+2\varepsilon')|N_Z(x)|\leq (1+ C'\varepsilon)\beta\gamma |Z| 
\end{align*}
for some $C'>0$ depending only on $p$ by \eqref{eq:|Nz(x)|} and \eqref{eq:d(Y,N_Z(x))}. Set $C_1:=\max\{C',8/p\}$. Then except for at most
$$2\varepsilon |X||Y|+2\varepsilon' |X||Y| \le  4\varepsilon' |X||Y|\leq C_1\varepsilon |X||Y|$$
pairs $(x,y)\in X\times Y$, the edge $xy$ lies in at most $(1+C_1\varepsilon)\beta\gamma |Z|$ triangles of $G$, thus the same upper bound applies to the number of such triangles in $\mathcal T_0$.
\end{poc}

With the same proof of Claim~\ref{claim:XY}, at most $C_2\varepsilon|X||Z|$ edges $xz\in E(X,Z)$ lie in more than $(1+C_2\varepsilon)\alpha\beta\,|Y|$ triangles of $\mathcal T_0$, and at most $C_3\varepsilon|Y||Z|$ edges $yz\in E(Y,Z)$ lie in more than $(1+C_3\varepsilon)\alpha\gamma\,|X|$ triangles of $\mathcal T_0$ for some $C_2,C_3>0$. 

Finally we call an edge $e\in E(G)$ \emph{bad} if $\dT(e)$ does not satisfy \eqref{eq:Taue} for $\dT_0$. We then remove all triangles containing a bad edge in $\dT_0$, and set the resulting collection to be $\dT$. Note then $\dT(e)$ satisfies \eqref{eq:Taue} for each $e\in E(G)$. Moreover, by \eqref{eq:Tau0} we have
$$|\mathcal T|\ge |\mathcal T_0|-(C_1+C_2+C_3)\cdot \varepsilon|X||Y||Z|
\ge (1-C\varepsilon)\alpha\beta\gamma |X||Y||Z|,$$
where $C=C_0+(C_1+C_2+C_3)/p^3$, as required.
\end{proof}

\medskip

\section{Finding near-perfect triangle packing}\label{sec:main}

In this section, we prove Theorem~\ref{thm:main}. Indeed, to find a near-perfect triangle packing in $G_d\cup G(n,p)$,  by Theorem~\ref{thm:HR} it suffices to find a fractional packing $f$ with $\|f\|\geq e(G)/3-o(n^2)$. Our main target in this section is the construction of such a near-perfect fractional packing, as follows.

\begin{proposition}\label{prop:fractionalpacking}
Let $d>0$ and $p> 2d/(1+2d)$, let $n$ be sufficiently large and let $G_d$ be a $dn$-regular graph on $n$ vertices. Then for any $\varepsilon>0$, with high probability there exists a fractional triangle packing $f$ of $G=G_d\cup G(n,p)$ such that
$$\|f\|\ge (1-\varepsilon)\cdot \frac{e(G)}{3}.$$
\end{proposition}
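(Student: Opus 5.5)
The plan is to build the fractional packing on a reduced graph using Lemma~\ref{lem:triangleweightkn} and then lift it to $G$ using Lemma~\ref{lem:trianglecounting}.

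\textbf{Setting up the reduced weights.} Fix $\eta>0$ with $p(1+2d)\ge 2d+\eta$. Choose $\varepsilon\ll \eta,p$ and $m$ large, and apply Lemma~\ref{lem:regularity} to the deterministic graph $G_d$. This gives clusters $V_1,\dots,V_M$ of size about $n/M$. Let $\delta_{ij}$ be the density of $G_d$ in $(V_i,V_j)$. With high probability $G(n,p)$ is $\varepsilon$-regular with density $p\pm\varepsilon$ between all pairs of linear-size sets, by Chernoff and a union bound over subsets. Hence for every $\varepsilon$-regular pair of $G_d$, the pair $(V_i,V_j)$ is $O(\varepsilon)$-regular in $G$ with density
\[
w_{ij}=p+(1-p)\delta_{ij}\pm O(\varepsilon).
\]
For the at most $\varepsilon M$ irregular pairs at each cluster, I discard the $G_d$-edges and keep only the random edges. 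Those pairs are then regular with density $w_{ij}=p\pm\varepsilon$. I also discard all edges inside clusters. In total this loses $O(\varepsilon n^2+n^2/M)$ edges. All $w_{ij}$ lie in $[p-\varepsilon,1]$, so the hypothesis of Lemma~\ref{lem:trianglecounting} holds with $p/2$ in place of $p$.

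\textbf{Nonnegativity (the key step).} Since $G_d$ is $dn$-regular, for every $i$
\[
\sum_j \delta_{ij}\le dM+O(\varepsilon M)+O(1).
\]
Also $\sum_{ij}\delta_{ij}\ge dM^2/2-O(\varepsilon M^2)-O(M)$, because edges inside clusters and in irregular pairs number only $O(\varepsilon n^2+n^2/M)$. Write $q=p+(1-p)d$. Then $d_i\le qM+O(\varepsilon M)$ and $W\ge qM^2/2-O(\varepsilon M^2)$. Substituting into \eqref{eq:xijkl} with $n$ replaced by $M$ gives
\[
x_{ijk}\ \ge\ \frac{w_{ij}+w_{ik}+w_{jk}-3q+q-O(\varepsilon)}{M}\ \ge\ \frac{3p-2q-O(\varepsilon)}{M}.
\]
Here $3p-2q=p(1+2d)-2d\ge\eta$, so $x_{ijk}\ge(\eta-O(\varepsilon))/M>0$. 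This step is exactly where the threshold $2d/(1+2d)$ and the regularity of $G_d$ enter. The delicate bookkeeping is making sure the errors from irregular pairs and intra-cluster edges only shift $d_i$ and $W$ by $O(\varepsilon M)$ and $O(\varepsilon M^2)$ respectively. By Lemma~\ref{lem:triangleweightkn}, $\sum_k x_{ijk}=w_{ij}$ for every pair.

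\textbf{Lifting to $G$.} For each triple $ijk$, take the collection $\dT_{ijk}$ from Lemma~\ref{lem:trianglecounting} applied to $G[V_i,V_j,V_k]$, and let $s=|V_i|$. Give each $T\in\dT_{ijk}$ the weight
\[
f(T)=\frac{x_{ijk}}{(1+C\varepsilon)\,w_{ij}w_{jk}w_{ik}\,s}.
\]
Consider an edge $e\in E(V_i,V_j)$. By \eqref{eq:Taue}, $e$ lies in at most $(1+C\varepsilon)w_{ik}w_{jk}s$ triangles of $\dT_{ijk}$. So its total load is at most $\sum_k x_{ijk}/w_{ij}=1$, and $f$ is a fractional triangle packing. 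Its total weight is
\[
\|f\|\ \ge\ \sum_{i<j<k}\frac{(1-C\varepsilon)}{(1+C\varepsilon)}\,x_{ijk}\,s^2=(1-O(\varepsilon))\,\frac{s^2}{3}\sum_{i<j}w_{ij},
\]
using $\sum_{\text{triples}}x=\frac13\sum_{\text{pairs}}w$. The right-hand side is $(1-O(\varepsilon))\,e(G)/3-O(\varepsilon n^2+n^2/M)$.

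Finally, $e(G)=\Omega(pn^2)$ with high probability, so this loss is at most $\varepsilon' e(G)$ after rescaling $\varepsilon$ and taking $M$ large. This gives $\|f\|\ge(1-\varepsilon')\,e(G)/3$, which proves Proposition~\ref{prop:fractionalpacking}.
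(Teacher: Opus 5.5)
Your proposal is correct and is essentially the paper's proof: weights on the reduced $K_M$ from Lemma~\ref{lem:triangleweightkn}, nonnegativity from $3p-2q=p(1+2d)-2d>0$ via the degree control supplied by regularity of $G_d$, and lifting through Lemma~\ref{lem:trianglecounting} with $f(T)=x_{ijk}/\bigl((1+C\varepsilon)w_{ij}w_{jk}w_{ik}s\bigr)$. The differences are technical and harmless. First, you regularize $G_d$ instead of $G$; transferring regularity to $G$ needs concentration of the random edges among the fixed non-edges of $G_d$ between $A'$ and $B'$, not just quasirandomness of $G(n,p)$, but this follows from the same Chernoff--union-bound argument because $G_d$ is fixed in advance. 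Second, you keep the random edges in irregular pairs, so every reduced triangle receives its formula weight and $\sum_k x_{ijk}=w_{ij}$ holds exactly. This is cleaner than the paper's choice of setting $x_{ijk}=0$ on triangles through irregular pairs: their formula values can be negative, so zeroing them can push $\sum_k x_{ijk}$ above $w_{ij}$ by $O(\varepsilon)$.
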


Note then Theorem~\ref{thm:main} follows immediately. 

\begin{proof}[Proof of Theorem~\ref{thm:main}]
First, by Proposition~\ref{prop:fractionalpacking}, with high probability there exists a fractional triangle packing $f$ of $G$ such that $\|f\|\ge e(G)/3-o(n^2)$. Then by Theorem~\ref{thm:HR} we have
$$\nu(G)\geq \nu^*(G)- o(n^2)\geq \|f\|-o(n^2)\geq \frac{e(G)}{3} -o(n^2)$$
as required.
\end{proof}

In the remainder of this section, we prove Proposition~\ref{prop:fractionalpacking} by constructing such a fractional packing of $G=G_d\cup G(n,p)$ in three steps. First, we apply the regularity lemma to $G$ to obtain a reduced graph $R$, which is essentially a complete graph $K_M$. We then assign an initial weight $w_{ij}$ to each edge of $R$, where $w_{ij}$ is defined as the density of the corresponding pair $(V_i, V_j)$. Next, by applying Lemma~\ref{lem:triangleweightkn}, we assign triangle weights to each triangle in $R$ so that, for every edge $e_{ij} \in R$, the total weight of all triangles containing $e_{ij}$ is approximately $w_{ij}$. Finally, we return to $G$ and construct a fractional packing $f$ by assigning weights to each triangle in $G$ according to the triangle weights determined in $R$.

\subsection{Step I: Initial edge weights on reduced graphs}

Given $G=G_d\cup G(n,p)$, we first apply the Regularity Lemma to obtain a reduced graph $R$, together with an initial assignment of edge weights on $R$. More precisely, we establish the following lemma.

\begin{lemma}\label{lem:Gpartition}
Let $0<p,d\leq 1$ be constant, let $n$ be sufficiently large and let $G:=G_d\cup G(n,p)$ where $G_d$ is a $dn$-regular graph on $n$ vertices. Given $\varepsilon>0$, there exists $M\in\mathbb{N}$ such that the following holds. With high probability there exists a partition 
$$V(G)=V_1\cup...\cup V_M,\qquad\text{where}\ \big||V_i|-|V_j|\big|\leq 1\ \text{for all}\ i,j\in[M],$$ 
such that 
\begin{enumerate}
    \item[(i)] for all but at most $\varepsilon M^2$ pairs $(i,j)$ where $1\le i<j\le M$, the pair $(V_i,V_j)$ is $\varepsilon$-regular;

    \item[(ii)] for each $i\in [M]$, there are at most $\varepsilon M$ values of $j\in [M]$ such that $(V_i,V_j)$ is not $\varepsilon$-regular;

    \item[(iii)] for each pair $i,j\in [M]$, the corresponding edge density satisfies
    $$d(V_i,V_j)\geq (1-\varepsilon)p;$$

    \item[(iv)]  for each $i\in [M]$, the density sum 
    $$\sum_{j: j\neq i}d(V_i,V_j)\leq (1+\varepsilon)qM,$$ where $q: = p+(1-p)d$.
\end{enumerate}
\end{lemma}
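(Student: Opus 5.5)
We apply Lemma~\ref{lem:regularity} directly to $G$ and then verify (iii) and (iv) using concentration of $G(n,p)$ together with the regularity of $G_d$.

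Fix $\varepsilon>0$ and take $m:=\lceil 2/\varepsilon\rceil$. Apply Lemma~\ref{lem:regularity} to $G$ with parameters $(\varepsilon,m)$. This gives an equitable partition $V_1\cup\dots\cup V_M$ with $m\le M\le Z(\varepsilon,m)$, and properties (i) and (ii) follow at once. The partition depends on the random graph, so we cannot fix it first and then apply concentration. Instead we prove a statement uniform over all partitions.

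The key probabilistic input is the following. With high probability, for every pair of disjoint sets $A,B\subseteq V$ with $|A|,|B|\ge n/(2Z)$,
$$\big|e_{G(n,p)}(A,B)-p|A||B|\big|\le \varepsilon^2 p|A||B|.$$
For fixed $A,B$, Chernoff's bound gives failure probability $\exp(-\Omega(\varepsilon^4p\,n^2/Z^2))$. There are at most $4^n$ choices of $(A,B)$, so a union bound suffices because $Z$ is a constant. We use the same argument for the edges of $G(n,p)$ that avoid $G_d$: with high probability, for all such $A,B$,
$$e_{G}(A,B)=e_{G_d}(A,B)+p\big(|A||B|-e_{G_d}(A,B)\big)\pm \varepsilon^2 |A||B|.$$
This holds because each non-$G_d$ pair is present independently with probability $p$.

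Property (iii) follows immediately: $d_G(V_i,V_j)\ge d_{G(n,p)}(V_i,V_j)\ge (1-\varepsilon^2)p$.

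For (iv), fix $i$ and sum over $j\ne i$. With $|V_j|=(1\pm o(1))n/M$,
$$\sum_{j\ne i} d(V_i,V_j)\le \frac{M^2}{n^2}(1+o(1))\Big[(1-p)\,e_{G_d}(V_i,V\setminus V_i)+p|V_i|n\Big]+\varepsilon^2 M.$$
Regularity of $G_d$ gives $e_{G_d}(V_i,V\setminus V_i)\le |V_i|\,dn$. The bracket is therefore at most $|V_i|n\,(p+(1-p)d)=|V_i|nq$, and the whole sum is at most $(1+o(1))qM+\varepsilon^2M\le(1+\varepsilon)qM$, using $q\ge p$, which is bounded below.

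The main obstacle is exactly this dependence of the partition on the randomness. The uniform concentration over all pairs of linear-size sets is what handles it. Beyond that the argument is bookkeeping, including the $\pm1$ part sizes.
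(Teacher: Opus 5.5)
Your proposal is correct and follows the paper's proof: the regularity lemma gives (i)--(ii), and (iii) comes from Chernoff plus a union bound over all pairs of linear-sized sets, which is how the dependence of the partition on $G(n,p)$ is handled. The only variation is in (iv). The paper bounds $\sum_{j\neq i}d(V_i,V_j)\le \Delta(G)/\lfloor n/M\rfloor$ using vertex-degree concentration $\Delta(G)\le(1+\delta)qn$, whereas you sum your uniform pair estimate over $j$ and use $e_{G_d}(V_i,V\setminus V_i)\le |V_i|dn$. Both routes work, but for your last step to hold for every $\varepsilon$ take the additive error below $\varepsilon q$, e.g.\ $\varepsilon p/2$ in place of $\varepsilon^2$.
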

\begin{proof}
We apply the regularity Lemma~\ref{lem:regularity} to $G$, then (i) and (ii) hold directly. 

For (iii), given a pair of disjoint $n/M$-sets $U,V\subset V(G)$, the expected number of random edges in the induced bipartite $G(n,p)[U,V]$ is $\E\big[e(G(n,p)[U,V])\big]=p\cdot(n/M)^2$. Then by Chernoff's inequality we have
$$\Prob\Big(e(G[U,V])\leq (1-\varepsilon)p(n/M)^2\Big)\leq e^{-\Omega(p(n/M)^2)}=e^{-\Omega(n^2)}.$$
By taking a union bound over all pairs $U,V$ we get (iii).

For (iv), given a vertex $v \in V(G)$, its expected degree is $\E[d(v)] = dn + p(n - 1 - dn) = qn - p$. Since edges in $G(n,p)$ are drawn independently, standard Chernoff bounds and the union bound imply that with high probability, the maximum degree of $G$ is bounded by $\Delta(G) \le (1 + \delta)qn$ for an arbitrarily small constant $\delta > 0$. For any $i \in [M]$, the total number of edges between $V_i$ and $V \setminus V_i$ is at most the sum of the degrees of the vertices in $V_i$. Note that $|V_j| \ge \lfloor n/M \rfloor$ for each $j\in [M]$. Therefore, we can bound the density sum as 
$$\sum_{j \neq i} d(V_i, V_j) = \sum_{j \neq i} \frac{e(V_i, V_j)}{|V_i||V_j|} \le \frac{\sum_{v \in V_i} d(v)}{|V_i| \lfloor n/M \rfloor} \le \frac{|V_i| \Delta(G)}{|V_i| \lfloor n/M \rfloor} = \frac{\Delta(G)}{\lfloor n/M \rfloor}.$$
For sufficiently large $n$, we have $\lfloor n/M \rfloor \ge n / (M(1+\delta))$. Choosing $\delta$ small enough such that $(1+\delta)^2 \le 1+\varepsilon$, we have
$$\sum_{j \neq i} d(V_i, V_j) \le \frac{(1+\delta)qn}{n / (M(1+\delta))} = (1+\delta)^2 qM \le (1+\varepsilon)qM,$$
as required.
\end{proof}

Now for $G=G_d\cup G(n,p)$ and $\varepsilon>0$, by Lemma~\ref{lem:Gpartition} we get a partition $V(G)=V_1\cup...\cup V_M$ satisfying properties (i)-(iv). We construct a \emph{reduced graph} with respect to $G$ as follows. 

\begin{construction}[Edge-weighted reduced graph]\label{construction:reducedgraph}
 Let $R=K_M$ be a complete graph with $V(R)=\{v_1,...,v_M\}$. For each edge $e_{ij}:=v_iv_j$, set
$w_{ij}= d(V_i,V_j)$ if $(V_i,V_j)$ is an $\varepsilon$-regular pair in $G$. Otherwise set $w_{ij}=0$.   
\end{construction}

Set $d_i:=\sum_{i\neq j}w_{ij}$ and $ W:=\sum_{i<j} w_{ij}$. Note then we have
\begin{align}\label{eq:di}
d_i\leq (1+\varepsilon)qM
\end{align}
holds for all $i\in [M]$. Moreover,
\begin{align}\label{eq:W}
(1-4\varepsilon)q\binom{M}{2}\le W\le
(1+4\varepsilon)q\binom{M}{2}.   
\end{align}

Indeed, \eqref{eq:di} holds by (iv) in Lemma~\ref{lem:Gpartition}, and the second inequality of \eqref{eq:W} holds by \eqref{eq:di} and taking sum over $i\in[M]$. For the lower bound of $W$, on the one hand, the expected number of edges in $G$ is $\E[e(G)]=\big(p(1-d)+d\big)\cdot\binom{n}{2}=q\binom{n}{2}$. Hence by Chernoff's bound for the random part, we know that with high probability $e(G)\geq (1-\varepsilon)q\binom{n}{2}$. On the other hand, after taking a regular partition, the number of edges in clusters is at most $\binom{n/M}{2}\cdot M$, and the number of edges between irregular pairs is at most $\varepsilon M^2 \cdot (n/M)^2 = \varepsilon n^2$. Therefore,
$$W\cdot (n/M)^2\geq e(G)- \binom{n/M}{2}\cdot M- \varepsilon n^2\geq (1-3\varepsilon)q\cdot\binom{n}{2},$$
and then the first inequality in \eqref{eq:W} follows as required.

\medskip

\subsection{Step II: Triangle weights in reduced graphs}
Based on the reduced graph $R$ with edge weights, we next construct a proper triangle weighting on the triangles of $R$. Precisely we prove the following lemma.

\begin{lemma}\label{lem:reducedweight}
Let $\varepsilon>0$, let $M\in\mathbb{N}$ be sufficiently large and let $a,b,c>0$ satisfying $3a-3b/2+c/3>\varepsilon/M$. Let $R=K_M$ be a complete graph with $V(R)=\{v_1,...,v_M\}$, and equipped with an edge weight $w_{ij}$ for each $e_{ij}=v_iv_j\in E(R)$ satisfying the following properties:
\begin{enumerate}
    \item[$(W_1)$] for all but $\varepsilon M^2$ pair $i,j\in[M]$, the weight $w_{ij}\geq aM$;

    \item[$(W_2)$] for each $i\in[M]$, set $d_i:=\sum_{i\neq j}w_{ij}$, then $d_i\leq b\cdot\binom{M}{2}$;

    \item[$(W_3)$] the weight sum $ W:=\sum_{i<j} w_{ij}\geq c\cdot \binom{M}{3}$.
\end{enumerate}
Then there exists a triangle weight $x_{ijk}\geq 0$ for each triangle $T_{ijk}=\{e_{ij},e_{jk},e_{ik}\}\in\dT(R)$ where $1\leq i<j<k\leq M$ such that 
\begin{enumerate}
    \item [$(X_1)$] for each edge $e_{ij}$ $(1\leq i<j\leq M)$, the weight sum of triangles containing $e_{ij}$ is at most $w_{ij}$, that is
    $$\sum_{k\notin\{i,j\}}x_{ijk}\leq w_{ij};$$

    \item [$(X_2)$] the total sum of triangle weights satisfies
    $$\sum_{1\leq i<j<k\leq M}x_{ijk}\geq (1-4\varepsilon)\cdot \frac{c}{3}\cdot\binom{M}{3}.$$
\end{enumerate}
\end{lemma}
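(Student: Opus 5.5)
The plan is to apply the explicit formula \eqref{eq:xijkl} of Lemma~\ref{lem:triangleweightkn} on the good triangles and set every other triangle weight to zero. Let an edge $e_{ij}$ be \emph{bad} if $w_{ij}<aM$, and a triangle \emph{bad} if it contains a bad edge. By $(W_1)$ there are at most $\varepsilon M^2$ bad edges. Let $y_{ijk}$ be the value given by \eqref{eq:xijkl}. We define $x_{ijk}:=y_{ijk}$ for good triangles and $x_{ijk}:=0$ for bad triangles.

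\textbf{Nonnegativity on good triangles.} First I will check that $y_{ijk}\ge 0$ whenever $T_{ijk}$ is good. Plugging the three hypotheses into \eqref{eq:xijkl} gives
\[
y_{ijk}\ \ge\ \frac{3aM}{M-4}-\frac{3b\binom{M}{2}}{(M-3)(M-4)}+\frac{2c\binom{M}{3}}{(M-2)(M-3)(M-4)} .
\]
The right-hand side is $3a-\tfrac32 b+\tfrac13 c$ up to corrections of order $(a+b+c)/M$. I expect the main obstacle to be making this quantitative. One has to track the exact rational factors, for instance $\frac{3M}{M-4}\ge 3$ and $\frac{\binom M2}{(M-3)(M-4)}=\frac12(1+O(1/M))$, and then use the strict margin $3a-3b/2+c/3>\varepsilon/M$ with $M$ large to absorb them. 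If the margin $\varepsilon/M$ turns out to be too thin to absorb the $O(b/M)$ error in the degree term, I would instead rely on the exact favourable sign of the $a$- and $c$-terms. The fallback is to note that only the sign of the final sum matters, so a slightly smaller constant suffices once $M$ is large.

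\textbf{Condition $(X_1)$.} Once nonnegativity holds, all $x_{ijk}\ge0$. By Lemma~\ref{lem:triangleweightkn}, for every edge we then get
\[
\sum_{k}x_{ijk}\ \le\ \sum_k y_{ijk}\ =\ w_{ij}.
\]
Here we use that for a good edge we only discard the contributions of bad triangles, which are nonnegative where they were defined. There is one subtle point: a bad triangle may contain a good edge, and its $y$-value could be negative, in which case discarding it increases the sum. To handle this I would take the simplest fix of restricting to the subgraph of good edges and re-checking the inequality directly. Alternatively I would bound the total contribution of bad triangles at a good edge by $O(\varepsilon)w_{ij}$ and rescale all $x$ by a factor $1-O(\varepsilon)$.

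\textbf{Condition $(X_2)$.} Summing the identity of Lemma~\ref{lem:triangleweightkn} over all edges gives
\[
\sum_{T}y_T=\frac{W}{3}\ \ge\ \frac c3\binom M3 .
\]
The loss from zeroing bad triangles is at most
\[
(\#\text{bad edges})\cdot M\cdot\max_T|y_T|\ \le\ \varepsilon M^3\cdot O\!\left(\max_{ij} w_{ij}/M+b+c\right).
\]
Using that $w_{ij}$, $d_i$ and $W$ are of the sizes forced by $(W_2)$ and $(W_3)$, this loss is at most $3\varepsilon\cdot\frac c3\binom M3$ after adjusting constants. This yields the factor $1-4\varepsilon$.
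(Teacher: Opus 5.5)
You follow the paper's construction: the value $y_{ijk}$ of \eqref{eq:xweight} on triangles whose three edges have weight at least $aM$, and $0$ otherwise. You have also spotted the real weak point, which the paper passes over by saying that $(X_1)$ ``holds directly by Lemma~\ref{lem:triangleweightkn}''. For a good edge, $\sum_k x_{ijk}=w_{ij}-\sum_{k:\,T_{ijk}\text{ bad}}y_{ijk}$, and the discarded values can be negative. Neither of your fixes closes this from $(W_1)$--$(W_3)$.

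\emph{Restricting to the good edges.} Lemma~\ref{lem:triangleweightkn} only handles $K_M$, so this amounts to setting the bad weights to $0$ and reapplying the formula. Triangles through a zero edge still receive possibly negative values and must still be discarded, so nothing changes.

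\emph{Rescaling by $1-O(\varepsilon)$.} This needs each good edge to lie in only $O(\varepsilon M)$ bad triangles, i.e.\ a per-vertex bound on bad pairs, whereas $(W_1)$ only bounds their total. Concretely, let $10\mid M$, put $w_{jk}=3/5$ for all $j,k\neq 1$, and give vertex $1$ exactly $0.3M$ edges of weight $0$, $0.2M$ edges of weight $1/2$, and the rest weight $1$. With $a=1/(2M)$, $b=1.2/(M-1)$, $c=1.8/M$, all of $(W_1)$--$(W_3)$ hold and $3a-\frac32b+\frac c3\approx 0.3/M$. Take $j$ with $w_{1j}=1/2$; for each of the $0.3M$ vertices $k$ with $w_{1k}=0$ one gets $y_{1jk}=(-0.1+O(1/M))/M$. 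Hence $\sum_k x_{1jk}\approx 0.53>w_{1j}$. The excess is a constant fraction of $w_{1j}$, so no global rescaling by $1-O(\varepsilon)$ absorbs it.

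The missing ingredient is the per-vertex bound of Lemma~\ref{lem:Gpartition}(ii): at most $\varepsilon M$ bad pairs at each vertex. It is available in the application but is not a hypothesis here. With it, each good edge lies in at most $2\varepsilon M$ bad triangles, each with $|y|=O(1/M)$. The excess is then $O(\varepsilon)$ against $w_{ij}\ge aM$, and dividing all $x_{ijk}$ by $1+O(\varepsilon/(aM))$ restores $(X_1)$.

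Two smaller points. First, nonnegativity need not be hedged. With $s=3a-\frac32b+\frac c3$, your displayed lower bound equals exactly $\frac{M(M-1)}{(M-3)(M-4)}\,s-\frac{6aM}{(M-3)(M-4)}$, which is nonnegative iff $s(M-1)\ge 6a$. Since $aM\le w_{ij}\le 1$ for any good pair, $a\le 1/M$, and $s>\varepsilon/M$ suffices once $M\ge 1+6/\varepsilon$. Neither ``favourable signs'' nor ``only the sign matters'' gives this without $w_{ij}\le 1$. You also need $w_{ij}\le 1$, as the paper does, for $|y_{ijk}|=O(1/M)$; $(W_2)$--$(W_3)$ only give $w_{ij}\le d_i=O(M)$.

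Second, the loss from discarding triangles, and from the rescaling, is $O(\varepsilon M^2)$ with a constant depending on $aM,bM,cM$. Meanwhile $\frac c3\binom M3=\Theta(M^2)$ with its own constant. So this argument gives $(X_2)$ only with a factor $1-O(\varepsilon)$, not literally $1-4\varepsilon$. The paper's argument has the same slack, which is harmless for Proposition~\ref{prop:fractionalpacking}.
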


\begin{proof}
For each triangle $T_{ijk}$, if $w_{ij},w_{jk},w_{ik}\geq aM$ we set
\begin{align}\label{eq:xweight}
x_{ijk}=\frac{w_{ij} + w_{ik} + w_{jk}}{M-4} - \frac{d_i + d_j + d_k}{(M-3)(M-4)} + \frac{2W}{(M-2)(M-3)(M-4)};
\end{align}
otherwise we set $x_{ijk}=0$. 

First we claim $x_{ijk}\geq 0$ for each $1\leq i<j<k\leq M$. Indeed, it suffices to check the case $w_{ij},w_{jk},w_{ik}\geq aM$. By \eqref{eq:xweight} we have
$$x_{ijk}\geq \frac{3aM}{M-4}-\frac{3b\cdot\binom{M}{2}}{(M-3)(M-4)} + \frac{2c\cdot\binom{M}{3}}{(M-2)(M-3)(M-4)}\geq \frac{3aM}{M}-\frac{3b\cdot M^2}{2M^2}+\frac{c\cdot M^3}{3M^3}-\frac{\varepsilon}{M},$$
where the first inequality holds by $(W_1)-(W_3)$, and the second inequality holds since $M$ is sufficiently large. By assumption that $3a-3b/2+c/3>\varepsilon/M$, we have
$$x_{ijk}\geq 3a-3b/2+c/3-\varepsilon/M \geq 0.$$

Next, note that ($X_1$) holds directly by Lemma~\ref{lem:triangleweightkn}. Then it remains to check ($X_2$). Indeed, on the one hand, the number of triangles containing edges with $w_{ij}<aM$ is at most $\varepsilon M^2\cdot M\leq \varepsilon M^3$. 

On the other hand, by \eqref{eq:xweight}, together with $w_{ij}\le 1$, $d_i\le b\binom{M}{2}$, and $W\le \binom{M}{2}$, we have
\[
x_{ijk}\le \frac{3}{M-4}+\frac{3b\binom{M}{2}}{(M-3)(M-4)}+\frac{2\binom{M}{2}}{(M-2)(M-3)(M-4)}=O(1/M).
\]
Hence the total weight of all deleted triangles is at most $O(1/M)\cdot \varepsilon M^3=O(\varepsilon M^2)$. Therefore by Lemma~\ref{lem:triangleweightkn} again we have
$$\sum_{1\leq i<j<k\leq M}x_{ijk}\geq\frac{1}{3}\cdot\sum_{i<j}w_{ij}-O(\varepsilon M^2)\geq (1-4\varepsilon)\cdot \frac{c}{3}\cdot\binom{M}{3},$$
where the first inequality holds since each triangle is counted 3 times, and the last inequality holds by ($W_3$), as required.
\end{proof}

\subsection{Step III: Fractional triangle packing in $G$} 

Based on the triangle weights on reduced graphs, now we are ready to construct the triangle weights on the original $G$, which is exactly the fractional packing as required. Gathering Lemmas~\ref{lem:regularity}, ~\ref{lem:trianglecounting}, \ref{lem:Gpartition} and \ref{lem:reducedweight} together, in this step we prove Proposition~\ref{prop:fractionalpacking}.

\begin{proof}[Proof of Proposition~\ref{prop:fractionalpacking}]
Given $G=G_d\cup G(n,p)$ with $d>0$ and $p>2d/(1+2d)$, set $q:=d+(1-d)p$. Let $\varepsilon>0$ be small enough. 

First, we apply regularity Lemma~\ref{lem:regularity} to $G$. By Lemma~\ref{lem:Gpartition}, there exists a partition $V(G)=V_1\cup...\cup V_M$ such that (i)-(iv) in Lemma~\ref{lem:Gpartition} hold, where $M\in\mathbb{N}$ is a sufficiently large constant. 

We next construct the reduced graph $R$ with respect to $G$. Let $R=K_M$ be a complete graph with $V(R)=\{v_1,...,v_M\}$. For each edge $e_{ij}:=v_iv_j$, set
$w_{ij}= d(V_i,V_j)$ if $(V_i,V_j)$ is an $\varepsilon$-regular pair in $G$. Otherwise set $w_{ij}=0$. Set $d_i:=\sum_{i\neq j}w_{ij}$ and $ W:=\sum_{i<j} w_{ij}$. By \eqref{eq:di} and \eqref{eq:W} we have
\begin{align}\label{eq:di'}
d_i\leq (1+\varepsilon)qM
\end{align}
holds for all $i\in [M]$, and
\begin{align}\label{eq:W'}
(1-4\varepsilon)q\binom{M}{2}\le W\le
(1+4\varepsilon)q\binom{M}{2}.   
\end{align}

Then we apply Lemma~\ref{lem:reducedweight} to $R$ to construct triangle weights for $R$. Set 
\begin{align*}
a=(1-\varepsilon)p/M,\qquad b=2(1+\varepsilon)q/M,\qquad c=3(1-4\varepsilon)q/M.   
\end{align*}
Note then ($W_1$) holds by (iii) of Lemma~\ref{lem:Gpartition}, and ($W_2$), ($W_3$) hold by \eqref{eq:di'} and \eqref{eq:W'} respectively. Moreover, by the choice of $a,b,c$ we have 
$$3a-3b/2+c/3\geq 3(1-\varepsilon)p/M - 3(1+\varepsilon)q/M + (1-4\varepsilon)q/M.$$
Since $p>2d/(1+2d)$ and $q=d+(1-d)p$, we have
$$3a-3b/2+c/3\geq \frac{1}{M}\cdot \Big(p-2d+2pd-\varepsilon\cdot(3p+7q)\Big)>\varepsilon/M$$
where the last inequality holds since $\varepsilon$ is small enough (we just need $\varepsilon<(p-2d+2pd)/(1+3p+7q)$). Therefore, by applying Lemma~\ref{lem:reducedweight} to $R$, there exists a triangle weight $x_{ijk}\geq 0$ for each triangle $T_{ijk}\in\dT(R)$ such that ($X_1$)-($X_2$) holds.  

Now let us construct triangle weights for each triangle in $G$. We first consider triangles between regular $3$-tuples. Let $1\leq i<j<k\leq M$ be an arbitrary 3-tuple such that $(V_i,V_j),(V_j,V_k)$ and $(V_i,V_k)$ are all $\varepsilon$-regular. Note then the corresponding density $w_{ij},w_{jk}, w_{ik}\geq(1-\varepsilon)p$ by (iii) of Lemma~\ref{lem:Gpartition}. Hence by choosing $\alpha=w_{ij}$, $\beta= w_{jk}$, $\gamma=w_{ik}$, we may apply Lemma~\ref{lem:trianglecounting} to the $3$-partite graph $G[V_i,V_j,V_k]$, to get a collection of triangles $\dT_{ijk}\subset \dT(G[V_i,V_j,V_k])$ such that
\begin{align}\label{eq:dTijk}
|\dT_{ijk}|\ge(1-C\varepsilon)\cdot\alpha\beta\gamma\cdot |V_i||V_j||V_k|\geq (1-C\varepsilon)\cdot w_{ij}w_{jk}w_{ik}\cdot(n/M)^3,   
\end{align}
and let $\dT_{ijk}(e)\subset\dT_{ijk}$ be the set of triangles containing $e$ in $\dT_{ijk}$ for $e\in E(G[V_i,V_j,V_k])$,
\begin{equation}\label{eq:Taue'}
|\dT_{ijk}(e)|\le
\begin{cases}
(1+C\varepsilon)\cdot w_{ik}w_{kj}\cdot (n/M) & \text{if } e \in E(V_i,V_j), \\[4pt]
(1+C\varepsilon)\cdot w_{ij}w_{jk}\cdot (n/M) & \text{if } e \in E(V_i,V_k), \\[4pt]
(1+C\varepsilon)\cdot w_{ij}w_{ik}\cdot(n/M) & \text{if } e \in E(V_j,V_k),
\end{cases}
\end{equation}
where $C>0$ is some absolute constant not relying on $\varepsilon$ and $i,j,k$, since $w_{ij},w_{jk}, w_{ik}\geq(1-\varepsilon)p>p/2$ for all regular 3-tuples $(i,j,k)$, and thus $C=C(p)$. For each triangle  $T\in\dT_{ijk}$, we set
\begin{align}\label{eq:fijk}
f(T)=f_{ijk}:=\frac{x_{ijk}}{(1+C\varepsilon)w_{ij}w_{jk}w_{ik}}\cdot\frac{M}{n}.   
\end{align}
For any other triangle $T\in\dT(G)$, we set $f(T)=0$. Then we claim such a weight function $f$ is a fractional triangle packing of $G$.

\begin{claim}
The function $f:\dT(G)\rightarrow [0,1]$ is a fractional triangle packing of $G$.
\end{claim}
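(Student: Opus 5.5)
To prove the claim I will check two things: that every value $f(T)$ is non-negative, and that for every edge $e\in E(G)$ the total weight $\sum_{T\ni e} f(T)$ is at most $1$. Together these show that $f$ is a fractional triangle packing.

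\emph{Non-negativity.} This is immediate. Lemma~\ref{lem:reducedweight} gives $x_{ijk}\ge 0$. On every regular triple we have $w_{ij},w_{jk},w_{ik}\ge (1-\varepsilon)p>0$, so the denominator in \eqref{eq:fijk} is positive. On all other triangles $f$ is $0$.

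\emph{Setting up the edge sum.} Fix an edge $e\in E(G)$ with $f(T)>0$ for some $T\ni e$. Then $e$ lies between two clusters $V_i,V_j$ with $i<j$. If $e$ lies inside a cluster, or in an irregular pair, then $f$ vanishes on every triangle containing it. Every triangle carrying positive weight and containing $e$ belongs to $\dT_{ijk}$ for a unique third index $k\notin\{i,j\}$, namely the cluster containing its third vertex, and $(i,j,k)$ must be a regular triple. The sets $\dT_{ijk}$ for distinct triples are disjoint, so
\[
\sum_{T\ni e} f(T)=\sum_{k}|\dT_{ijk}(e)|\cdot f_{ijk}.
\]

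\emph{Bounding each term.} By \eqref{eq:Taue'}, $|\dT_{ijk}(e)|\le (1+C\varepsilon)\,w_{ik}w_{jk}\,(n/M)$; the edge variable is whichever pair contains $e$, and the bound is the product of the two other densities. The main bookkeeping point is that these two densities cancel against the denominator of \eqref{eq:fijk}. Multiplying the two bounds gives
\[
|\dT_{ijk}(e)|\, f_{ijk}\le \frac{x_{ijk}}{w_{ij}}.
\]
The factors $(1+C\varepsilon)$ cancel, and the factors $n/M$ and $M/n$ cancel. One detail needs care: the lemma was stated with the labels $X,Y,Z$ and densities $\alpha,\beta,\gamma$. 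I must make sure the case assignment in \eqref{eq:Taue'} matches the choice $\alpha=w_{ij}$, $\beta=w_{jk}$, $\gamma=w_{ik}$, so that for an edge in $(V_i,V_j)$ the bound really uses the two other densities $w_{ik}w_{jk}$. The cases for $(V_i,V_k)$ and $(V_j,V_k)$ follow by symmetry.

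\emph{Summing over $k$.} Adding over $k$ and using $(X_1)$ of Lemma~\ref{lem:reducedweight} gives
\[
\sum_{T\ni e} f(T)\le \frac{1}{w_{ij}}\sum_{k\notin\{i,j\}}x_{ijk}\le \frac{w_{ij}}{w_{ij}}=1.
\]
Here the sum may include $k$ whose triple is not regular or whose edge weights are below the threshold, but those $x_{ijk}$ are $0$ or non-negative, so including them only makes the bound weaker.

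\emph{Range of $f$.} Each individual value satisfies $f(T)\le 1$ whenever $e$ lies in at least one triangle of $\dT_{ijk}$. In any case $x_{ijk}=O(1/M)$, so $f(T)=O(1/n)\le 1$ for large $n$. Hence $f$ maps into $[0,1]$.

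I expect the only real obstacle to be the index bookkeeping just described: matching each edge type with the correct product of the other two densities, so that the per-edge bound cancels exactly against the normalisation in \eqref{eq:fijk}.
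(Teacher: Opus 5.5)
Your proposal is correct and follows the paper's proof essentially step for step. You discard edges inside clusters or in irregular pairs, write the load on an edge $e$ of a regular pair $(V_i,V_j)$ as $\sum_k f_{ijk}|\dT_{ijk}(e)|$, cancel \eqref{eq:Taue'} against the normalisation \eqref{eq:fijk} to get $\frac{1}{w_{ij}}\sum_{k}x_{ijk}$, and conclude with $(X_1)$ of Lemma~\ref{lem:reducedweight}; your explicit checks of non-negativity, disjointness of the families $\dT_{ijk}$, and the matching of edge types with density products are details the paper leaves implicit.
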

\begin{poc}
For each edge $e\in E(G)$, it suffices to show the sum of the weights $f(T)$ over triangles $T$ containing $e$ is at most 1. Indeed, if $e$ is contained in some cluster $G[V_i]$ or $e\in (V_i,V_j)$ that is not an $\varepsilon$-regular pair, then $f(T)=0$ for all triangle $T$ containing $e$. Otherwise if $e\in E(V_i,V_j)$ that is $\varepsilon$-regular, the weight sum of triangles containing $e$ satisfies
$$\sum_{T\in \dT(G): e\in E(T)}f(T)\leq\sum_{k\notin\{i,j\}} f_{ijk}\cdot |\dT_{ijk}(e)|\leq(1+C\varepsilon)\cdot(n/M)\cdot\sum_{k\notin\{i,j\}}w_{ik}w_{kj}\cdot f_{ijk}$$
where the second inequality holds by \eqref{eq:Taue'} and $f(T)=0$ for all triangles in $\dT(G[V_i,V_j,V_k])$ but not in $\dT_{ijk}$. Further by \eqref{eq:fijk}, we may plug the value of $f_{ijk}$ to get
$$\sum_{T\in \dT(G): e\in E(T)}f(T)\leq \frac{1}{w_{ij}}\cdot\sum_{k\notin\{i,j\}}x_{ijk}\leq1$$
where the last inequality holds by ($X_1$) of Lemma~\ref{lem:reducedweight}, as required.
\end{poc}

Therefore, it remains to check $\|f\|\geq(1-\varepsilon)\cdot e(G)/3$. Indeed, by definition $\|f\|$ is the sum of all triangle weights in $G$, that is
\begin{align*}
\|f\|=\sum_{T\in\dT(G)}f(T)=\sum_{1\leq i<j<k\leq M}f_{ijk}\cdot|\dT_{ijk}|.
\end{align*}
By the estimation of $|\dT_{ijk}|$ (\eqref{eq:dTijk}) and the value of $f_{ijk}$ (\eqref{eq:fijk}), we have
\begin{align*}
\|f\|&=\sum_{T\in\dT(G)}f(T)=\sum_{1\leq i<j<k\leq M}f_{ijk}\cdot|\dT_{ijk}|\\
&\geq (1-C\varepsilon)\cdot \bigg(\frac{n}{M}\bigg)^3\cdot \bigg(\sum_{1\leq i<j<k\leq M}  w_{ij}w_{jk}w_{ik}\cdot \frac{x_{ijk}}{(1+C\varepsilon)w_{ij}w_{jk}w_{ik}}\cdot\frac{M}{n}\bigg)\\
&\geq \frac{(1-C\varepsilon)}{(1+C\varepsilon)}\cdot\bigg(\frac{n}{M}\bigg)^2\cdot\sum_{1\leq i<j<k\leq M}x_{ijk}\\
&\geq \frac{(1-C\varepsilon)}{(1+C\varepsilon)}\cdot\bigg(\frac{n}{M}\bigg)^2\cdot (1-4\varepsilon)\cdot \frac{c}{3}\cdot\binom{M}{3}\\
&\geq \frac{(1-C\varepsilon)}{(1+C\varepsilon)}\cdot\bigg(\frac{n}{M}\bigg)^2\cdot (1-4\varepsilon)\cdot \frac{3(1-4\varepsilon)q}{3M}\cdot\binom{M}{3}\\
&\geq (1-O(\varepsilon))\cdot \frac{q}{3}\cdot\binom{n}{2}\\
&\geq (1-O(\varepsilon))\cdot \frac{e(G)}{3},
\end{align*}
where the third inequality holds by ($X_2$) of Lemma~\ref{lem:reducedweight} and $c=3(1-4\varepsilon)q/M$, and the last inequality holds since $e(G)= (1\pm\varepsilon)q\binom{n}{2}$ holds with high probability, as required.
\end{proof}

\medskip

\section{Extremal constructions for the lower bounds and the regime $d>1/2$}\label{sec:low}

In this section, we prove Propositions~\ref{prop:lowforall} and ~\ref{prop:fakeupper}.

\subsection{Lower bounds for $p_d$}
First, we show the construction for the lower bound of $p_d$ for $0<d\leq1/2$ as follows.

\begin{lemma}\label{lem:low0<d<1/2}
Let $0<d\leq 1/2$, $p< 2d/(1+2d)$ and $c=2d/(1+2d)-p$, and let $n$ be sufficiently large. Then there exists a $dn$-regular graph $B_d$ on $n$ vertices such that with high probability, every triangle packing in $G=B_d\cup G(n,p)$ leaves at least $cn^2/4$ uncovered edges. In other words, for all $0<d\leq 1/2$,
$$p_d\geq \frac{2d}{1+2d}.$$
\end{lemma}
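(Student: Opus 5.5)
The plan is to take $B_d$ to be a \emph{bipartite} $dn$-regular graph, and to show that the few random edges inside the two parts are the bottleneck for covering the crossing edges. Concretely, let $n$ be even with $dn$ an integer (the admissible $n$), split $V=A\cup B$ with $|A|=|B|=n/2$, and let $B_d$ be any $dn$-regular bipartite graph between $A$ and $B$. Such a graph exists since $dn\le n/2=|B|$ when $d\le 1/2$; for instance, take a union of $dn$ disjoint perfect matchings of $K_{n/2,n/2}$ coming from a cyclic Latin-square decomposition. The key structural fact is that every triangle in $G=B_d\cup G(n,p)$ contains either $0$ or exactly $2$ edges crossing $(A,B)$, because a triangle cannot be bipartite. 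Hence every triangle that uses a crossing edge uses exactly one edge inside $A$ or inside $B$. All such inside edges belong to $G(n,p)$.

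Now fix an arbitrary triangle packing $\mathcal P$ in $G$. Let $t$ be the number of triangles of $\mathcal P$ meeting $E(A,B)$. Each of these triangles uses a distinct inside edge, so $t\le e(G[A])+e(G[B])$. The number of covered crossing edges is exactly $2t$, so at least
\[
e_G(A,B)-2\big(e(G[A])+e(G[B])\big)
\]
crossing edges remain uncovered. Next I would estimate both quantities with Chernoff bounds. With high probability $e(G[A])+e(G[B])\le (p/4+o(1))n^2$. Also $e_G(A,B)\ge dn^2/2+(p-o(1))(n^2/4-dn^2/2)$, since the random edges on the $n^2/4-dn^2/2$ crossing non-edges of $B_d$ are independent. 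Substituting, the uncovered count is at least
\[
\Big(\frac{d}{2}+\frac{p}{4}-\frac{pd}{2}-\frac{p}{2}-o(1)\Big)n^2=\Big(\frac{(2d-p-2pd)}{4}-o(1)\Big)n^2=\Big(\frac{(1+2d)c}{4}-o(1)\Big)n^2 .
\]
This is at least $cn^2/4$ for large $n$, because $(1+2d)c>c$ absorbs the $o(1)$ error. This bound holds simultaneously for all packings, since it only uses the two edge counts, which are fixed once $G$ is sampled.

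The final step is to conclude $p_d\ge 2d/(1+2d)$. For any $p<2d/(1+2d)$ we have $c>0$, so w.h.p.\ $\Omega(n^2)$ edges remain uncovered, and thus $p$ is not in the set whose infimum defines $p_d$. There is no serious obstacle here. The only points requiring care are the existence of the regular bipartite graph (which needs $d\le 1/2$, explaining the range) and keeping track of the concentration error so that it is dominated by the slack factor $1+2d$.
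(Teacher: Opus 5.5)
Your proposal is correct and follows essentially the same route as the paper: a bipartite $dn$-regular $B_d$ on two halves, the observation that every triangle must use an edge inside a part, and Chernoff estimates giving at least $(2d-p-2dp)n^2/4=(1+2d)c\,n^2/4$ uncovered edges. Your bookkeeping (counting only uncovered crossing edges via $e_G(A,B)-2t$) yields exactly the paper's quantity $e(G)-3\big(e(G[X])+e(G[Y])\big)=e(G[X,Y])-2\big(e(G[X])+e(G[Y])\big)$, and you additionally justify the existence of $B_d$, which the paper leaves implicit.
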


\begin{proof}
Let $B_d$ be a bipartite $dn$-regular graph with $V(B_d)=X\cup Y$, where $|X|=|Y|= n/2$. Set $G=B_d\cup G(n,p)$.

First, let us estimate the number of edges of $G$ between and inside the two parts. Since $B_d$ is a bipartite $dn$-regular graph, we have
$$e_{B_d}(X,Y)=(1+o(1))d\cdot\frac{n^2}{2}
\qquad\text{and}\qquad
e_{B_d}(X)+e_{B_d}(Y)=0.$$
By Chernoff's bound, with high probability the number of edges between $X,Y$ and of $G[X],G[Y]$ satisfy
\begin{align}\label{eq:eG(XY)}
e(G[X,Y])=(1+ o(1))(2d+p-2dp)\cdot\frac{n^2}{4},
\qquad
e(G[X])+e(G[Y])=(1+ o(1))p\cdot\frac{n^2}{4}.    
\end{align}

We next bound the size of triangle packings in $G$. Observe that every triangle in $G$ contains at least one edge from $G[X]\cup G[Y]$. Indeed, since $B_d$ is bipartite, there is no triangle using only edges in $G[X,Y]$, and hence every triangle in $G$ must use at least one edge lying inside $X$ or inside $Y$. Therefore, any triangle packing in $G$ contains at most $e(G[X])+e(G[Y])$ triangles. It follows that every triangle packing leaves at least $e(G)-3(e(G[X])+e(G[Y]))$ edges uncovered. Then it remains to estimate the above quantity. Indeed,
\begin{align*}
e(G)-3(e(G[X])+e(G[Y]))
&= e(G[X,Y])+e(G[X])+e(G[Y])-3(e(G[X])+e(G[Y]))\\
&= e(G[X,Y])-2(e(G[X])+e(G[Y])).
\end{align*}
Combined with \eqref{eq:eG(XY)}, we have
\begin{align*}
e(G)-3(e(G[X])+e(G[Y]))
&= (1+ o(1))(2d-p-2dp)\cdot\frac{n^2}{4}.
\end{align*}
Recall that $c=2d/(1+2d)-p$, we have $2d-p-2dp=(1+2d)\left(\frac{2d}{1+2d}-p\right)=(1+2d)c.$
Hence,
\begin{align*}
e(G)-3(e(G[X])+e(G[Y]))= (1+ o(1))\cdot c(1+2d)\cdot\frac{n^2}{4}> \frac{cn^2}{4},
\end{align*}
where the last inequality holds for sufficiently large $n$. Thus every triangle packing in $G$ leaves at least $cn^2/4$ uncovered edges with high probability, as required.
\end{proof}

\begin{remark}\label{remark:lowerpd}
The lower bound of $p_d\geq(3-4d)/(4-4d)$ holds when $1/2< d\leq 3/4$ is similar. Indeed, consider a complete bipartite graph union a $(d-1/2)n$ regular graph on both sides, in order to pack almost all triangles, we need to cover almost all edges in the complete bipartite graph, this means we need $p>(3-4d)/(4-4d)$ to make sure the density inside the two sides is at least 
$$2p(1-d)+2(d-1/2)\geq 1/2$$
as required.
\end{remark}

Note then Proposition~\ref{prop:lowforall} holds by Lemma~\ref{lem:low0<d<1/2} and Remark~\ref{remark:lowerpd}.

\subsection{The regime $d>1/2$}

For $1/2<d\leq 3/4$, let $G_d$ be a $dn$-regular graph on $n$ vertices. If $p>(3-4d)/(4-4d)$, then the expected degree of each $v\in V(G)$ where $G=G_d\cup G(n,p)$ is
$$\E[d(v)]=dn+(1-d)np> nd+ (3/4-d)n>3n/4.$$
By Chernoff's inequality and taking union bound over all $v\in V(G)$, we know $\delta(G)>3n/4$ with high probability. Assume Conjecture~\ref{conj:NW} holds, then $G$ contains a triangle packing covering all but $O(n)$ edges (to achieve the $K_3$-divisible condition). For $d>3/4$, the $dn$-regular graph contains a triangle packing covering all but $O(n)$ edges directly assuming Conjecture~\ref{conj:NW}, thus $p_d=0$.

Notice by treating the 0.82733 result of Delcourt and Postle~\cite{DelcourtPostle} as a black box, indeed if the minimum degree $\delta(G)\geq 0.83n$ with high probability, then we can get a perfect triangle packing in $G$. To do this, we just need $\E(d(v))=dn+(1-d)np\geq 0.83 n$ for each $v\in V(G)$, which implies the following upper bound.

\begin{proposition}
\begin{equation*}\label{eq:low}
p_d\leq
\begin{cases}
\frac{0.83-d}{1-d}, & \text{if } 1/2< d\leq 0.83, \\[4pt]
0, & \text{if } d>0.83.
\end{cases}
\end{equation*}
\end{proposition}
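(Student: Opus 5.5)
The plan is to run the argument of the preceding subsection for $d>3/4$, with the Delcourt--Postle minimum-degree threshold in place of the conjectured $3n/4$ threshold. Fix $\delta_0:=0.82733$, the bound of Delcourt and Postle~\cite{DelcourtPostle}. For every fixed $\eta>0$ and $n\ge n_0(\eta)$, every $K_3$-divisible graph $H$ on $n$ vertices with $\delta(H)\ge(\delta_0+\eta)n$ has a perfect triangle packing. We use it only as a black box.

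\textbf{Case $1/2<d\le 0.83$.} Take $p>(0.83-d)/(1-d)$. Then
\[
\E[d_G(v)]=dn+(1-d)(n-1-dn)p\ge(0.83+\eta_0)n
\]
for some $\eta_0>0$ depending only on $p$ and $d$. By Chernoff's inequality and a union bound over the $n$ vertices, with high probability $\delta(G)\ge(0.83+\eta_0/2)n$.

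Since $G$ is not $K_3$-divisible in general, we fix this deterministically. Delete a set $F$ of $O(n)$ edges so that $G-F$ has all degrees even and $3\mid e(G-F)$. To make the degrees even, pair up the odd-degree vertices and delete a matching-like family of short paths between them; $\delta(G)$ is linear, so each pair is joined by a path of length at most $2$ through a common neighbour, and we choose the paths edge-disjoint greedily, removing at most $2$ edges at each vertex. Afterwards, remove one or two triangles, or, if needed, a $C_4$ plus a triangle, so that $3\mid e$ without changing degree parity. Every vertex loses $O(1)$ degree, hence
\[
\delta(G-F)\ge(0.83+\eta_0/4)n>(\delta_0+\eta_0/4)n.
\]
Applying the black box to $G-F$ gives a triangle decomposition of $G-F$, i.e.\ a packing in $G$ leaving only $|F|=O(n)=o(n^2)$ edges uncovered. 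Thus $p_d\le p$ for every such $p$, which gives $p_d\le (0.83-d)/(1-d)$.

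\textbf{Case $d>0.83$.} Take any $p\ge0$, including $p$ arbitrarily small. Then $\delta(G)\ge dn\ge(0.83+\eta)n$ deterministically, and the same parity and divisibility correction applies. Hence $p_d=0$.

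The only delicate point is the divisibility correction: the deletions must be chosen so that each vertex loses only $O(1)$ edges, so that the minimum degree stays above $\delta_0 n$. The greedy argument above works because $\delta(G)-0.83n$ is linear, leaving ample room. Everything else is routine concentration.
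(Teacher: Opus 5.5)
Your approach is the paper's own: Chernoff plus a union bound give $\delta(G)\ge 0.83n>0.82733n$ with high probability (deterministically when $d>0.83$), and the Delcourt--Postle threshold is then used as a black box. The paper only asserts that this yields a packing missing $O(n)$ edges, so your explicit divisibility repair is extra detail. Two things in it need fixing.

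First, a typo. The expected degree is $\E[d_G(v)]=dn+(n-1-dn)p$. With your extra factor $(1-d)$, the bound $\E[d_G(v)]\ge(0.83+\eta_0)n$ would be false for $p$ just above $(0.83-d)/(1-d)$.

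Second, and more substantively, the mod-$3$ step fails as written. Deleting a triangle removes $3$ edges, so it leaves $e \bmod 3$ unchanged. Your options (one triangle, two triangles, a $C_4$ plus a triangle) change $e$ by $3,6,7\equiv 0,0,1 \pmod 3$. So if $e\equiv 2\pmod 3$ after the parity step, none of them produces a $K_3$-divisible graph. The repair is easy:
\begin{itemize}
\item if $e\equiv 1\pmod 3$, delete one $C_4$;
\item if $e\equiv 2\pmod 3$, delete two edge-disjoint $C_4$'s or one $C_5$.
\end{itemize}
Cycles keep all degrees even and cost each vertex only $O(1)$ further edges. Such cycles exist edge-disjoint from the deleted paths because the minimum degree is still about $0.83n$. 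With this change the argument is complete.

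Alternatively, since only $o(n^2)$ uncovered edges are needed, you can skip divisibility entirely. Delcourt and Postle actually bound the fractional decomposition threshold, so $G$ itself has a fractional triangle decomposition, giving $\nu^*(G)=e(G)/3$. Theorem~\ref{thm:HR} then gives $\nu(G)\ge e(G)/3-o(n^2)$.
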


\section{Concluding remarks}\label{sec:concluding}

In this paper, we determine $p_d$, the infimum $p$ such that $G=G_d\cup G(n,p)$ contains a triangle packing covering all but $o(n^2)$ edges, where $0<d\le 1/2$ and $G_d$ is an arbitrary $dn$-regular graph. We believe that with further analysis using the iterated absorption method, the error term can be reduced to $O(n)$.

One first natural question is what the true value of $p_d$ is when $d>1/2$? (see the red curve in Figure~\ref{fig:slope}).
We conjecture the lower bound of $p_d$ in Proposition~\ref{prop:lowforall} is tight, without the assumption that Conjecture~\ref{conj:NW} holds.

\begin{conjecture}\label{conj:d>1/2}
Let $d>1/2$ and $p>(3-4d)/(4-4d)$ and let $G_d$ be a $dn$-regular graph. Then with high probability there exists a triangle packing in $G_d\cup G(n,p)$, which covers all but $o(n^2)$ edges.
\end{conjecture}

Conjecture~\ref{conj:d>1/2} provides a natural direction toward  Nash-Williams' Conjecture~\ref{conj:NW}.

It's also interesting to consider general $K_r$-packings in randomly perturbed graphs. Indeed, for $r\geq 3$, we find  natural symmetric weights for $K_r$-copies in weighted complete graphs, as an extension of the triangle-weighting Lemma~\ref{lem:triangleweightkn} as below.

\begin{lemma}
Let $K_n$ be equipped with weights $0 \leq w_e \leq 1$ for each edge $e \in E(K_n)$. Denote
\[ d_v := \sum_{u \neq v} w_{uv}, \qquad \text{and} \qquad W := \sum_{e \in E(K_n)} w_e. \]
For each integer $r \geq 3$ and each $r$-clique $R \subseteq V(K_n)$, set
\[ x_R := \frac{\sum_{e \subset R} w_e}{\binom{n-4}{r-2}} - \frac{(r-2) \sum_{v \in R} d_v}{(n-3)\binom{n-4}{r-2}} + \frac{(r-1)(r-2) W}{(n-2)(n-3)\binom{n-4}{r-2}}. \]
Then for each edge $e \in E(K_n)$, the sum of weights of $r$-cliques containing $e$ is exactly $w_e$, that is,
\[ \sum_{R \supset e} x_R = w_e. \]
\end{lemma}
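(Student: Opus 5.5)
Fix an edge $e=uv$ and compute $\sum_{R\supset e} x_R$ by splitting it into the three sums $S_1-S_2+S_3$ coming from the three terms in the definition of $x_R$, exactly as in the proof of Lemma~\ref{lem:triangleweightkn}. Each $r$-clique $R\supset e$ is $e$ together with an $(r-2)$-subset of the remaining $n-2$ vertices, so there are $\binom{n-2}{r-2}$ cliques in the sum. We then evaluate each $S_i$ by double counting: for a fixed edge $f$ or vertex $z$, count how many cliques $R\supset e$ also contain $f$ or $z$.

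For $S_1$ we need $\sum_{R\supset e}\sum_{f\subset R} w_f$, and the coefficient of $w_f$ depends only on $|f\cap e|$.
\begin{itemize}
\item If $f=e$, the coefficient is $\binom{n-2}{r-2}$.
\item If $|f\cap e|=1$, the coefficient is $\binom{n-3}{r-3}$. Summing such $w_f$ over all these $f$ gives $d_u+d_v-2w_e$.
\item If $f\cap e=\emptyset$, the coefficient is $\binom{n-4}{r-4}$. These $w_f$ sum to $W-d_u-d_v+w_e$.
\end{itemize}

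For $S_2$ we need $\sum_{R\supset e}\sum_{z\in R} d_z$.
\begin{itemize}
\item For $z\in\{u,v\}$, the coefficient is $\binom{n-2}{r-2}$.
\item For any other $z$, the coefficient is $\binom{n-3}{r-3}$, and these $d_z$ sum to $2W-d_u-d_v$.
\end{itemize}

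For $S_3$ the summand is constant, so $S_3=\binom{n-2}{r-2}$ times that constant.

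Next, collect the coefficients of $w_e$, of $(d_u+d_v)$ and of $W$ after dividing by $\binom{n-4}{r-2}$. The identity then reduces to three binomial identities in $n$ and $r$.

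\textbf{Coefficient of $w_e$.} It must equal $1$, which requires
\[
\binom{n-2}{r-2}-2\binom{n-3}{r-3}+\binom{n-4}{r-4}=\binom{n-4}{r-2}.
\]
This is a second difference, and it follows from applying Pascal's rule twice.

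\textbf{Coefficient of $d_u+d_v$.} It must vanish. The contribution from $S_1$ is
\[
\binom{n-3}{r-3}-\binom{n-4}{r-4}=\binom{n-4}{r-3}.
\]
This has to match the contribution from $S_2$, namely
\[
\frac{(r-2)}{n-3}\left(\binom{n-2}{r-2}-\binom{n-3}{r-3}\right)=\frac{r-2}{n-3}\binom{n-3}{r-2}.
\]
The two agree by the absorption identity $\frac{r-2}{n-3}\binom{n-3}{r-2}=\binom{n-4}{r-3}$.

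\textbf{Coefficient of $W$.} It must vanish, which requires
\[
\binom{n-4}{r-4}-\frac{2(r-2)}{n-3}\binom{n-3}{r-3}+\frac{(r-1)(r-2)}{(n-2)(n-3)}\binom{n-2}{r-2}=0.
\]
Rewrite everything in terms of $\binom{n-4}{r-4}$ using
\[
\binom{n-3}{r-3}=\frac{n-3}{r-3}\binom{n-4}{r-4},\qquad
\binom{n-2}{r-2}=\frac{(n-2)(n-3)}{(r-2)(r-3)}\binom{n-4}{r-4}.
\]
The bracket then becomes
\[
1-\frac{2(r-2)}{r-3}+\frac{r-1}{r-3}=\frac{(r-3)-2(r-2)+(r-1)}{r-3}=0.
\]

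The main obstacle is the bookkeeping in this last step. The case $r=3$ must be handled separately, since there $\binom{n-4}{r-4}=0$ and we cannot divide by $r-3$. It is simplest to note that $r=3$ is exactly Lemma~\ref{lem:triangleweightkn}, or to verify the identity directly for $r=3$, and to assume $r\ge4$ for the general computation. With these three checks in place, the sum over $R\supset e$ is $w_e$.
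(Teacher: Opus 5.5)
Your proof is correct and is the direct generalization of the paper's $S_1-S_2+S_3$ computation in the proof of Lemma~\ref{lem:triangleweightkn}; the paper itself states the $K_r$ version without proof. The three binomial identities you reduce to all check out, and so does your separate treatment of $r=3$. The one thing worth making explicit is that $x_R$ is only defined when $\binom{n-4}{r-2}\neq 0$, i.e.\ $n\ge r+2$, which is the analogue of the hypothesis $n\ge 5$ in Lemma~\ref{lem:triangleweightkn}.
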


With a similar idea we can prove an existence theorem for $K_r$-packing in $G_d\cup G(n,p)$. However, we have no reason to believe that the corresponding value of $p$ is optimal. Indeed, recently, Delcourt, Henderson, Lesgourgues and Postle \cite{DHLP}  showed the generalization of Nash-Williams’ Conjecture for $r\geq 4$, namely that every $K_r$-divisible graph with $\delta(G)\geq \big(1-\frac{1}{r+1}\big)n$ contains a perfect $K_r$-packing, is wrong. They showed that for each $r\geq 4$, there exists $c>1$ such that there exist infinitely many $K_r$-divisible graphs $G$ with $\delta(G)\geq \big(1-\frac{1}{c\cdot(r+1)}\big)v(G)$ and no (fractional) $K_r$-packing. Therefore it seems very interesting to find extremal constructions for $K_r$-packing in $G_d\cup G(n,p)$ as well.

\medskip

\section*{Acknowledgement}
We thank Jozsef Balogh for helpful discussions. 

HL and ZY were supported by the Institute for Basic Science (IBS-R029-C4), LW is supported by National Key R\&D Program of China under grant number 2024YFA1013900, NSFC under grant number 12471327 and by China Scholarship Council.


\end{document}